\definecolor{darkolivegreen}{rgb}{0.33, 0.42, 0.18}
\definecolor{cadmiumgreen}{rgb}{0.0, 0.42, 0.24}
\definecolor{calpolypomonagreen}{rgb}{0.12, 0.3, 0.17}
\newtheorem{thm}{Theorem}[section]
\newtheorem{lem}[thm]{Lemma}
\newtheorem{lemma}[thm]{Lemma}
\newtheorem{coro}[thm]{Corollary}
\newtheorem{corollary}[thm]{Corollary}
\newtheorem{prop}[thm]{Proposition}
\newtheorem{proposition}[thm]{Proposition}
\theoremstyle{definition}
\newtheorem{definition}[thm]{Definition}
\newtheorem{remark}[thm]{Remark}
\theoremstyle{remark}
\numberwithin{equation}{section}
\definecolor{esperance}{rgb}{0.0,0.5,0.0}
\newcommand{\PGL}{\operatorname{PGL}}
\newcommand\diag[1]{\operatorname{diag}\left(#1\right)}
\newcommand{\onto}{\xymatrix{\ar@{>>}[r]&}}
\begin{document}

\title[Chamber zeta function of $\operatorname{PGL}_3$]{Chamber zeta function and closed galleries in the standard non-uniform complex from $\operatorname{PGL}_3$}
\author{Soonki Hong}
\address{Department of Mathematical Education\\Catholic Kwandong University\\Beomil-ro, Gangneung 25601, Republic of Korea}
\email{soonki.hong@cku.ac.kr}
\author{Sanghoon Kwon}
\address{Department of Mathematical Education\\Catholic Kwandong University\\Beomil-ro, Gangneung 25601, Republic of Korea}
\email{skwon@cku.ac.kr, shkwon1988@gmail.com}

\thanks{2020 \emph{Mathematics Subject Classification.} Primary 11R59, 20E42; Secondary 05E18, 20C08}
\keywords{Bruhat-Tits building, Hecke operator, arithmetic quotient, zeta function}
\maketitle

\begin{abstract}
We introduce the \emph{chamber zeta function} for a complex of groups, defined via an Euler product over primitive tailless chamber galleries, extending the Ihara--Bass framework from weighted graphs to higher-rank settings. Let $\mathcal{B}$ be the Bruhat--Tits building of $\mathrm{PGL}_{3}(F)$ for a non-archimedean local field $F$ with residue field $\mathbb{F}_{q}$. For the standard arithmetic quotient $\Gamma\backslash\mathcal{B}$ with $\Gamma=\mathrm{PGL}_{3}(\mathbb{F}_{q}[t])$, we prove an Ihara--Bass type \emph{determinant formula} expressing the chamber zeta function as the reciprocal of a characteristic polynomial of a naturally defined chamber transfer operator. In particular, the chamber zeta function is \emph{rational} in its complex parameter. As an application of the determinant formula, we obtain explicit counting results for closed gallery classes arising from tailless galleries in $\mathcal{B}$, including exact identities and spectral asymptotics governed by the chamber operator.
\end{abstract}

\tableofcontents

\section{Introduction}

The Ihara--Bass zeta function for a finite regular graph expresses the Euler product over primitive closed geodesics as the reciprocal of a determinant attached to an adjacency operator. Starting from Ihara's original work on rank-one $p$-adic groups \cite{I} and its graph-theoretic refinements by Sunada \cite{Su}, Hashimoto \cite{Has}, and Bass \cite{B}, a broad ``zeta--determinant'' paradigm has developed at the interface of number theory, representation theory, and discrete geometry.
In recent years, several extensions to infinite graphs and weighted settings have been investigated; in particular, Deitmar--Kang \cite{DK18} zeta functions built from geodesics in the universal covering tree of infinite weighted graphs. In \cite{HK2}, the authors also explored the Selberg zeta function of graphs of groups and compared it with the zeta function introduced by Deitmar and Kang. 
On the higher-rank side, Kang--Li \cite{KL} and Kang--Li--Wang \cite{KLW10,KLW18} studied edge and chamber zeta functions for finite (cocompact) quotients of the $\widetilde{A}_2$ Bruhat--Tits building and established determinant formulas with applications to Ramanujan complexes.

\subsection*{Goal and context}
This paper addresses the higher-rank, non-uniform corner that naturally remains open in the above picture.
Let $\mathcal{B}$ be the Bruhat--Tits building of $\mathrm{PGL}_3(F)$ for a non-archimedean local field $F$ with residue field $\mathbb{F}_q$.
We focus on the \emph{standard non-uniform arithmetic quotient}
\[
G=\mathrm{PGL}_3\bigl(\mathbb{F}_q(\!(t^{-1})\!)\bigr),
\qquad
\Gamma=\mathrm{PGL}_3(\mathbb{F}_q[t]),
\qquad
X=\Gamma\backslash \mathcal{B},
\]
which is a non-compact complex of finite volume and admits a combinatorial cusp geometry.
Our aim is to define and compute a \emph{chamber-level} zeta function for $X$ and to prove an Ihara--Bass type determinant formula in this non-uniform setting.

\begin{figure}[ht]
\centering
\begin{tikzpicture}[>=latex, node distance=3.4cm, font=\small]

\node at (-4.8,3.6) {quotient type};
\node[rotate=90] at (-4.95,1.7) {non-uniform};
\node[rotate=90] at (-4.9,-1.8) {finite / cocompact};

\node at (-2.0,-3.6) {edge-level};
\node at (3.7,-3.6) {chamber-level};

\draw[dashed] (0.77,4) -- (0.77,-4);

\draw[dashed] (-5.2,0) -- (6.2,0);

\node[anchor=south] at (-2,3.3) {edge zeta};
\node[anchor=south] at (3.4,3.3) {chamber zeta};

\node[draw, rounded corners, align=center, minimum width=3.2cm, minimum height=1.6cm] (Q1) at (-2,-1.8)
{finite graphs,\\
(regular or irregular)\\
(Ihara, Bass, Hashimoto\\
Sunada, etc.)};

\node[draw, rounded corners, align=center, minimum width=3.2cm, minimum height=1.6cm] (Q2) at (3.5,-1.8)
{$\widetilde{A}_2$-buildings,\\
cocompact lattices in\\
$\mathrm{PGL}_3(F)$, chamber zeta\\
(\cite{KL},\cite{KLW10},\cite{KLW18})};

\node[draw, rounded corners, align=center, minimum width=3.2cm, minimum height=1.6cm] (Q3) at (-2,1.8)
{weighted graphs (\cite{DK18}),\\standard non-uniform\\
complex for $\mathrm{PGL}_3$\\
 (\cite{HK3})
};

\node[draw, rounded corners, thick, align=center, minimum width=3.2cm, minimum height=1.6cm, fill=gray!10] (Q4) at (3.5,1.8)
{standard non-uniform\\
complex for $\mathrm{PGL}_3$,\\
type~1 chamber zeta\\
\textbf{(this paper)}};

\draw[->, thick] (Q3.east) -- (Q4.west);
\draw[->, thick] (Q2.north) -- (Q4.south);

\end{tikzpicture}
\caption{A schematic position of this work among edge and chamber zeta functions for cocompact and non-uniform quotients.}
\end{figure}
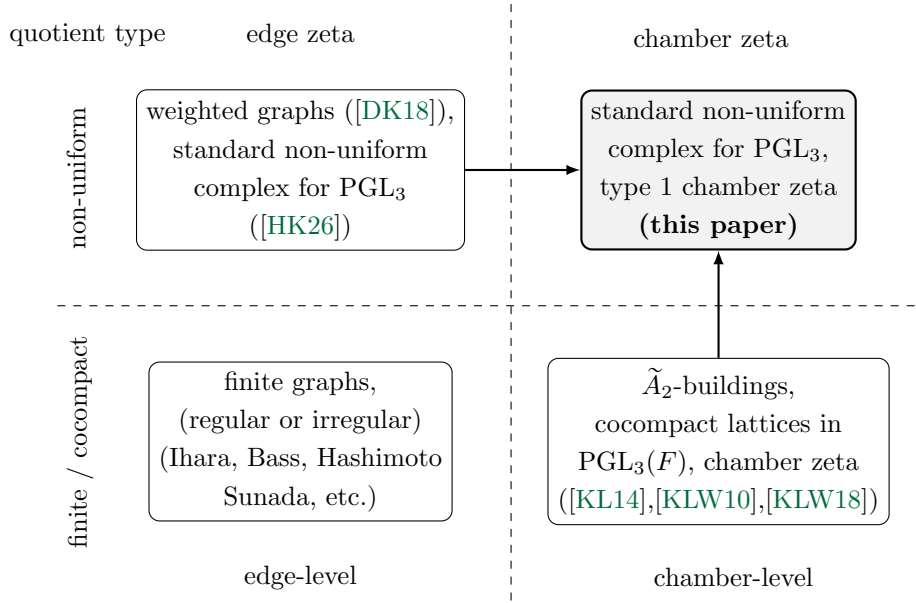

\medskip

\paragraph{Notation.}
Let $\pi:\mathcal{B}\to X=\Gamma\backslash\mathcal{B}$ be the quotient map.
Throughout the paper, when working in the building $\mathcal{B}$ \emph{before introducing the quotient},
we may denote unpointed chambers in $\mathcal{B}$ by $C,D$ (and similarly $C_0,\dots,C_{n-1},C_n$ for a gallery);
once $X$ is introduced, we reserve $C,D$ for unpointed chambers in $X$ and write their lifts in $\mathcal{B}$ as
$\widetilde{C},\widetilde{D}$.
Pointed chambers are denoted by $c,d$ in $X$ and by $\tilde c,\tilde d$ in $\mathcal{B}$.
An unpointed gallery is denoted by $\mathbf{C}=(C_0,\dots,C_{n-1},C_n)$, and a pointed gallery by
$\mathbf{c}=(c_0,\dots,c_{n-1},c_n)$.
For a pointed chamber $c$ we write $|c|$ for its underlying unpointed chamber, and for a pointed gallery
$\mathbf{c}$ we set $|\mathbf{c}|=(|c_0|,\ldots,|c_{n-1}|,|c_n|)$.
Square brackets $[\mathbf{c}]$ denote the shift-equivalence class of a closed pointed gallery.

\subsection*{Why weights are intrinsic in the non-uniform case}
In a cocompact quotient, a local adjacency step of pointed chambers lifts uniquely (up to the group action), so the chamber zeta function can be defined as an unweighted Euler product.
In contrast, in the non-uniform quotient $X=\Gamma\backslash\mathcal{B}$, a single adjacency step in the quotient may admit \emph{several distinct tailless lifts} in the building.
Consequently, any Euler product intended to enumerate tailless chamber galleries must record this lift multiplicity.
We incorporate it through a weight function $w(c,c')$ on adjacent type~1 pointed chambers $c,c'$ in $X$, defined as the number of tailless lifts in $\mathcal{B}$ of the step $c\to c'$ (see Section~\ref{sec:3}).
For a closed type~1 gallery $\mathbf{c}=(c_0,\dots,c_{n-1},c_n)$ in $X$ (which means $c_n=c_0$), we set
\[
w(\mathbf{c})=\prod_{j \bmod n} w(c_j,c_{j+1}),
\]
and for a closed gallery class $[\mathbf{c}]$ we write $w([\mathbf{c}])=w(\mathbf{c})$. We also denote by $\ell([\mathbf{c}])=n$ the length of the class $[\mathbf{c}]$.

\medskip

\begin{definition}[Type 1 chamber zeta function]
The \emph{type~1 chamber zeta function} of $X$ is defined by the Euler product
\[
Z_\Gamma(u)
=
\prod_{[\mathbf{p}]}
\left(1-w([\mathbf{p}])\,u^{\ell([\mathbf{p}])}\right)^{-1},
\]
where the product runs over all primitive type~1 closed gallery classes in $X$.
(We recall the notions of type, taillessness, closedness, and primitivity in Section~\ref{sec:pgl3-building-pointed}.)
\end{definition}
\medskip

\noindent\textbf{Main results.}
Our first result is an explicit computation of $Z_\Gamma(u)$ for the standard non-uniform quotient.

\begin{thm}\label{thm:1.1}
Let $G=\mathrm{PGL}_3\bigl(\mathbb{F}_q(\!(t^{-1})\!)\bigr)$ and $\Gamma=\mathrm{PGL}_3(\mathbb{F}_q[t])$.
Then the type~1 chamber zeta function $Z_\Gamma(u)$ converges for $|u|$ sufficiently small and is given by
\[
Z_{\Gamma}(u)
=
\frac{(1-q^4u^6)(1-q^2u^3)}{(1-q^3u^6)(1-q^3u^3)}.
\]
In particular, $Z_\Gamma(u)$ is a rational function of $u$.
\end{thm}

The second result is the determinant formula that underlies Theorem~\ref{thm:1.1}.
Let $C(X)$ be the set of type~1 pointed chambers in $X$ and let
\[
S(C(X))=\bigoplus_{c\in C(X)} \mathbb{C}c
\]
be the formal vector space with basis $C(X)$.
Define the \emph{weighted chamber transfer operator} $T\colon S(C(X))\to S(C(X))$ by
\[
T(c)=\sum_{c'} w(c,c')\,c',
\]
where the sum runs over all type~1 pointed chambers $c'$ edge-adjacent to $c$ in $X$.
Although $X$ is infinite, we show that the powers $T^n$ are \emph{traceable} in the sense that the diagonal coefficient sum
\[
\mathrm{Tr}(T^n)=\sum_{c\in C(X)} \langle T^n c, c\rangle
\]
is finite for each fixed $n$, and that the trace coincides with the weighted count of closed gallery classes:
\[
\mathrm{Tr}(T^n)
=
\sum_{[\mathbf{c}]:\,\ell([\mathbf{c}])=n} w([\mathbf{c}])\,\ell([\mathbf{c}]_0),
\]
where $[\mathbf{c}]_0$ denotes the primitive class underlying $[\mathbf{c}]$.
As a consequence, the logarithmic derivative of $Z_\Gamma(u)$ is encoded by the traces of $T^n$.

To formulate the determinant identity without appealing to operator-spectrum language, we define
$\det(I-uT)$ by truncation.
More precisely, we introduce finite truncations $T_{\le R}$ of $T$ supported on a finite subcomplex obtained by cutting off $X$ along cusp depth, and we set
\[
\det(I-uT)
\]
to be the unique power series whose coefficients stabilize to those of 
\[\det(I-uT_{\le R})\quad\text{ as }\quad R\to\infty.\]
The traceability mechanism implies this coefficient stabilization.
With this convention, we obtain the Ihara--Bass type determinant formula
\[
Z_\Gamma(u)=\frac{1}{\det(I-uT)}
\qquad\]
as an identity of formal power series, hence of rational functions.

\medskip

\noindent\textbf{Counting closed galleries.}
Let
\[
N_n(X)=\sum_{[\mathbf{c}]:\,\ell([\mathbf{c}])=n} w([\mathbf{c}])\,\ell([\mathbf{c}]_0)
\]
be the weighted number of type~1 closed gallery classes of length $n$ in $X$.
Then
\[
Z_\Gamma(u)
=
\exp\left(\sum_{n\ge 1}\frac{N_n(X)}{n}u^n\right),
\]
and Theorem~\ref{thm:1.1} yields explicit formulas for $N_n(X)$.

\begin{corollary}\label{coro:intro_count}
Let $\Gamma=\mathrm{PGL}_3(\mathbb{F}_q[t])$ and $X=\Gamma\backslash\mathcal{B}$.
Then $N_n(X)=0$ unless $3|n$, and for $n=3r$ one has
\[
N_{n}(X)=
\begin{cases}
3q^{3r}-3q^{2r}, & \text{if } n\equiv 3 \pmod{6},\\
3q^{3r}-9q^{2r}+6q^{\frac{3}{2}r}, & \text{if } n\equiv 0 \pmod{6}.
\end{cases}
\]
\end{corollary}

\medskip

\noindent\textbf{Idea of the method: monotonicity, traceability, truncation.}
The key technical point is a cusp monotonicity phenomenon in the standard non-uniform quotient $X$.
Certain directed adjacency steps force a strictly increasing cusp depth (or, equivalently, a strictly decreasing depth in the opposite direction).
Once such a step occurs inside a tailless gallery, returning to the starting chamber requires paying a uniform length cost.
This yields a strong local finiteness statement: for each fixed length $n$, there are only finitely many admissible closed gallery classes of length $n$ in $X$.
This finiteness implies traceability of $T^n$ and justifies the trace--gallery correspondence.
The determinant computation then proceeds by a Fubini-type rearrangement together with a cusp truncation that reduces $\det(I-uT_{\le R})$ to a finite matrix determinant, which can be computed explicitly in terms of $q$.

\subsection*{Generality and outlook}

Although our main results are proved for the standard non-uniform quotient
$\Gamma\backslash\mathcal{B}$ with $\Gamma=\mathrm{PGL}_{3}(\mathbb{F}_{q}[t])$,
we view this paper primarily as a \emph{prototype} for a higher-rank zeta program
in finite-volume (non-cocompact) settings.
The point is that the chamber zeta function is not an ad hoc gadget:
it is simultaneously
(i) an Euler product over primitive tailless chamber galleries,
(ii) a dynamical zeta function attached to a natural ``chamber transfer''
(Hecke) operator,
and (iii) a determinant-type invariant whose coefficients are governed by traces
of iterates of that operator.

\medskip

A major novelty in the finite-volume higher-rank situation is that one cannot
directly take determinants on the full quotient complex, since the chamber set
is infinite and the transfer operator is not represented by a finite matrix.
Our approach resolves this by a robust truncation--stabilization mechanism:
we compute explicit characteristic polynomials on finite truncations,
prove coefficientwise stabilization, and then pass to the limit.
This provides a clean bridge from combinatorial-geometric input (tailless galleries)
to spectral output (a rational zeta function with an Ihara--Bass type formula),
in a context where such a bridge is not automatic.

\medskip

We expect that the same mechanism extends beyond the present case along several
natural directions:

\begin{itemize}
\item \textbf{Other groups and higher rank.}
For affine buildings of $\mathrm{PGL}_{d}$ (and more generally split reductive
groups over local fields), one can define analogous gallery-based zeta functions
for top-dimensional cells, and the associated transfer operators belong to the
appropriate Iwahori--Hecke algebra.
The present paper isolates the finite-volume difficulties and demonstrates
how a determinant formula can still be recovered via stabilized truncations.

\item \textbf{Other arithmetic lattices and congruence quotients.}
Replacing $\mathrm{PGL}_{3}(\mathbb{F}_{q}[t])$ by congruence subgroups or by
other $S$-arithmetic lattices should lead to zeta functions encoding finer
arithmetic and representation-theoretic information.
In particular, the poles and zeros predicted by the determinant expression
should be governed by Hecke eigenvalues on spaces of $\Gamma$-invariant functions
(or more generally on (co)homology with local coefficients).

\item \textbf{Face/panel variants and colored gallery constraints.}
One may define zeta functions that count closed galleries with additional
constraints (panel types, colorings, orientations, or local weights),
leading to families of transfer operators and potential factorizations
reflecting finer geometric decompositions.
\end{itemize}

\medskip

From this perspective, our explicit computation for
$\Gamma=\mathrm{PGL}_{3}(\mathbb{F}_{q}[t])$ should be regarded as the first
fully worked out finite-volume higher-rank example in which
\emph{both} the Euler product definition and the determinant/spectral side are
completely accessible.
We hope that this example will serve as a concrete testing ground for broader
questions on higher-rank zeta functions, Hecke dynamics on buildings, and their
interactions with automorphic spectra.

\medskip

\noindent\textbf{Organization of the paper.}
We review the chamber-system formalism and the Weyl-distance axioms for $(W,S)$-buildings in Section~\ref{sec:chamber-system-building}. In Section~\ref{sec:pgl3-building-pointed}, we specialize the abstract framework to the $\widetilde{A}_2$-type Bruhat--Tits building of $\operatorname{PGL}_3(F)$ and present its simplicial (lattice) realization.
Section~\ref{sec:3} constructs an explicit fundamental domain for $\Gamma\backslash\mathcal{B}$, and defines the admissible (type~$1$) pointed galleries together with the associated weight function on $X$. In Section~\ref{sec:4} we introduce the type~$1$ chamber zeta function and prove the Ihara--Bass type determinant formula by combining trace identities with a finite-depth truncation argument. Section~\ref{sec:5} computes $\det(I-uT)$ explicitly and completes the proofs of Theorem~\ref{thm:1.1} and Corollary~\ref{coro:intro_count}.

\subsubsection*{Acknowledgement} This research was supported by Basic Science Research Program through the National Research Foundation of Korea(NRF) funded by the Ministry of Education(No. RS-2025-25415913) and (No. RS-2023-00237811).

%
\section{Chamber systems and \texorpdfstring{$(W,S)$}{(W,S)}-buildings}\label{sec:chamber-system-building}

This section fixes notation and recalls the building language in the chamber-system formalism.
We emphasize the Weyl-distance axioms because later constructions (pointed chambers, directed transitions,
and trace formulas) ultimately reduce to counting certain gallery classes.
Standard references include \cite{AbramenkoBrown} and \cite{Ronan}.

\subsection{Chamber systems, panels, and galleries}\label{subsec:chamber-system}

Let $S$ be a finite set.

\begin{definition}[Chamber system]\label{def:chamber-system}
An \emph{$S$-chamber system} is a set $\mathcal{C}$ (whose elements are called \emph{chambers})
equipped with equivalence relations $\sim_s$ on $\mathcal{C}$ indexed by $s\in S$.
If $C\sim_s D$ we say that $C$ and $D$ are \emph{$s$-adjacent}.
The equivalence classes of $\sim_s$ are called \emph{$s$-panels}.
\end{definition}

\begin{definition}[Galleries]\label{def:galleries}
A \emph{gallery} of length $n$ is a sequence of chambers
\[
\mathbf{C}=(C_0,C_1,\ldots,C_{n})
\]
such that for each $i$ there exists $s_i\in S$ with $C_{i-1}\sim_{s_i} C_i$.
The word $s_1\cdots s_n$ is called the \emph{type} of the gallery.
A gallery is \emph{minimal} if no gallery connecting $C_0$ to $C_n$ has smaller length.
\end{definition}

\begin{definition}[Thickness and local finiteness]\label{def:thickness-localfiniteness}
An $S$-chamber system $\mathcal{C}$ is \emph{thick} if every $s$-panel contains at least three chambers.
It is \emph{locally finite} if every $s$-panel contains only finitely many chambers.
\end{definition}

\subsection{Coxeter systems and Coxeter complexes}\label{subsec:coxeter}

\begin{definition}[Coxeter system]\label{def:coxeter-system-abstract}
A \emph{Coxeter system} is a pair $(W,S)$ consisting of a group $W$ generated by $S$
subject to relations $(st)^{m(s,t)}=1$, where $m(s,s)=1$ and
$m(s,t)\in\{2,3,\dots\}\cup\{\infty\}$ for $s\neq t$.
Let $\ell:W\to \mathbb{Z}_{\ge 0}$ denote the associated length function.
A word $s_1\cdots s_n$ in $S$ is \emph{reduced} if $n=\ell(s_1\cdots s_n)$.
\end{definition}

\begin{definition}[Coxeter complex]\label{def:coxeter-complex-abstract}
Let $(W,S)$ be a Coxeter system.
The \emph{Coxeter complex} $\Sigma(W,S)$ is the $S$-chamber system with chamber set $W$,
where $w\sim_s ws$ for each $s\in S$.
It comes with a canonical Weyl distance $\delta_\Sigma(w_1,w_2)=w_1^{-1}w_2$.
\end{definition}

\subsection{\texorpdfstring{$(W,S)$}{(W,S)}-buildings via Weyl distance}\label{subsec:building-weyl-distance}

\begin{definition}[Weyl distance]\label{def:weyl-distance}
Let $(W,S)$ be a Coxeter system and let $\mathcal{C}$ be an $S$-chamber system.
A map $\delta\colon\mathcal{C}\times\mathcal{C}\to W$ is called a \emph{Weyl distance} if it satisfies:
\begin{enumerate}[label=(WD\arabic*), leftmargin=15mm]
\item \textbf{(Identity)} $\delta(C,D)=1$ if and only if $C=D$.
\item \textbf{(One-step rule)} If $\delta(C,D)=w$ and $D\sim_s D'$, then
$\delta(C,D')\in\{w,ws\}$; moreover, if $\ell(ws)=\ell(w)+1$, then $\delta(C,D')=ws$.
\item \textbf{(Existence of descent)} If $\delta(C,D)=w$ and $s\in S$ with $\ell(ws)=\ell(w)-1$,
then there exists $D'\sim_s D$ such that $\delta(C,D')=ws$.
\end{enumerate}
\end{definition}

\begin{definition}[$(W,S)$-building]\label{def:building-abstract}
A \emph{$(W,S)$-building} is an $S$-chamber system $\mathcal{C}$ together with a Weyl distance
$\delta\colon\mathcal{C}\times\mathcal{C}\to W$.
An \emph{apartment} is a subset $A\subseteq\mathcal{C}$ equipped with a bijection
$\iota:A\to W$ such that $\iota$ is an isomorphism of $S$-chamber systems and
$\delta(C,D)=\delta_\Sigma(\iota(C),\iota(D))$ for all $C,D\in A$.
\end{definition}

\begin{remark}\label{rem:residues}
Given $J\subseteq S$ and a chamber $C\in\mathcal{C}$, the \emph{$J$-residue through $C$} is the set of all chambers
reachable from $C$ by a gallery whose type uses only letters from $J$.
\end{remark}

The Weyl distance controls minimal galleries exactly as in the Coxeter complex.

\begin{lemma}[Reduced words give minimal galleries]\label{lem:reduced-word-minimal-gallery}
Let $(\mathcal{C},\delta)$ be a $(W,S)$-building. Fix chambers $C,D\in\mathcal{C}$ and write $\delta(C,D)=w$.
\begin{enumerate}[label=(\alph*), leftmargin=8mm]
\item For any gallery $\mathbf{C}=(C=C_0,\ldots,C_n=D)$ of type $s_1\cdots s_n$,
the word $s_1\cdots s_n$ represents $w$ in $W$, and $n\ge \ell(w)$.
\item If $w=s_1\cdots s_r$ is a reduced expression, then there exists a gallery from $C$ to $D$
of type $s_1\cdots s_r$. In particular, the minimal gallery length from $C$ to $D$ equals $\ell(w)$.
\end{enumerate}
\end{lemma}

\begin{proof}
We sketch the standard argument based on (WD2)--(WD3).
For (a), start at $C_0=C$ and apply (WD2) iteratively along the steps
$C_{i-1}\sim_{s_i}C_i$ to see that $\delta(C,C_i)$ is obtained from $\delta(C,C_{i-1})$ by either staying
the same or multiplying by $s_i$ on the right; in particular, $\delta(C,D)$ lies in the subgroup word
generated by $s_1,\ldots,s_n$, and one checks that the product in $W$ of $s_1\cdots s_n$ must equal $w$.
Since any word for $w$ has length at least $\ell(w)$, we get $n\ge \ell(w)$.

For (b), write $w=s_1\cdots s_r$ reduced and argue by induction on $r$.
If $r=0$ there is nothing to prove. For $r\ge 1$, let $w'=ws_r$ so that $\ell(w')=r-1$.
By (WD3) applied to $s=s_r$ at the pair $(C,D)$, there exists $D'\sim_{s_r}D$ with $\delta(C,D')=w'$.
By induction there is a gallery from $C$ to $D'$ of type $s_1\cdots s_{r-1}$, and adjoining the final
$s_r$-step gives the required gallery to $D$.
\end{proof}

\begin{remark}\label{rem:distance-function}
Define the \emph{gallery distance} $d(C,D)$ as the minimal length of a gallery from $C$ to $D$.
Lemma~\ref{lem:reduced-word-minimal-gallery} shows that $d(C,D)=\ell(\delta(C,D))$.
This is the basic mechanism behind later ``length truncations'' and trace counts.
\end{remark}

\subsection{Buildings from affine BN-pairs (group-theoretic model)}\label{subsec:bn-pair-building}

We record the standard coset model that realizes many buildings as $G/I$ and encodes
the Weyl distance by Bruhat double cosets.

\begin{definition}[BN-pair and Weyl group]\label{def:bn-pair}
Let $G$ be a group. A \emph{BN-pair} is a pair of subgroups $(B,N)$ of $G$ such that
$G=\langle B,N\rangle$ and the quotient $W=N/(B\cap N)$ is generated by a set $S$ of involutions,
together with the usual axioms ensuring a Bruhat decomposition
\[
G=\bigsqcup_{w\in W} BwB
\]
and the standard multiplication rules with respect to simple reflections.
When $W$ is an affine Coxeter group, one speaks of an \emph{affine} BN-pair.
\end{definition}

In the affine BN-pair setting, $B$ plays the role of an Iwahori subgroup; we rename it $I$ to match later sections.

\begin{definition}[Iwahori and rank-$1$ parahorics]\label{def:iwahori-parahoric-strong}
Assume $G$ has an affine BN-pair $(I,N)$ with Weyl group $(W,S)$.
For each $s\in S$, fix a representative $\dot{s}\in N$ of $s$.
Define the \emph{rank-$1$ parahoric} subgroup
\[
P_s:=\langle I,\dot{s}\rangle.
\]
Equivalently, $P_s$ is the union $I\cup I\dot{s}I$ and satisfies $P_s/I$ finite in the locally finite case.
\end{definition}

\begin{definition}[Chambers and adjacency from parahorics]\label{def:coset-chambers}
Let $\mathcal{C}=G/I$.
For $s\in S$, define an $s$-adjacency relation on $\mathcal{C}$ by
\[
gI \sim_s g'I \quad \Longleftrightarrow \quad gP_s = g'P_s.
\]
\end{definition}

\begin{definition}[Weyl distance from double cosets]\label{def:delta-doublecoset}
Assume the Bruhat decomposition $G=\bigsqcup_{w\in W} IwI$.
Define $\delta:G/I\times G/I\to W$ by
\[
\delta(gI,hI)=w \quad\Longleftrightarrow\quad g^{-1}h\in IwI.
\]
\end{definition}

\begin{proposition}[Well-definedness and the one-step rule]\label{prop:delta-well-defined}
The map $\delta$ in Definition~\ref{def:delta-doublecoset} is well-defined.
Moreover, for $s\in S$ one has
\[
gI\sim_s hI \quad\Longleftrightarrow\quad \delta(gI,hI)\in\{1,s\}.
\]
\end{proposition}

\begin{proof}
Well-definedness follows from the disjointness of Bruhat double cosets $IwI$.
For the equivalence, note that $gI\sim_s hI$ means $g^{-1}h\in P_s$, and by
Definition~\ref{def:iwahori-parahoric-strong} we have $P_s\subseteq I\cup I\dot{s}I$,
so $g^{-1}h\in I$ or $I\dot{s}I$, i.e.\ $\delta(gI,hI)=1$ or $s$.
\end{proof}

\begin{thm}[Standard Bruhat--Tits fact]\label{thm:coset-building}
Under the axioms of an affine BN-pair (equivalently, the standard Bruhat--Tits setup),
the chamber system $G/I$ with adjacency as in Definition~\ref{def:coset-chambers}
and Weyl distance as in Definition~\ref{def:delta-doublecoset} is a thick, locally finite $(W,S)$-building.
\end{thm}

\begin{remark}\label{rem:why-this-matters}
This coset description is the natural home of ``galleries'' and ``Weyl distance'':
galleries become sequences of adjacent cosets, and $\delta$ is read off from Bruhat double cosets.
Later, our pointed-chamber dynamics refines this by introducing a directed state space.
\end{remark}

\section{The building of \texorpdfstring{$\operatorname{PGL}_3(F)$}{PGL3(F)}: simplicial realization and pointed chambers}\label{sec:pgl3-building-pointed}

In this section we specialize the abstract framework of Section~\ref{sec:chamber-system-building} to
$G=\operatorname{PGL}_3(F)$.
We record (i) the group-theoretic chamber model $G/I$ of type $\widetilde{A}_2$,
(ii) the standard simplicial realization via lattice classes in $F^3$,
and (iii) the pointed chamber language (types, directed adjacency, tailless pointed galleries)
used in the rest of the paper.

\subsection{Setup and Weyl group type}\label{subsec:pgl3-setup}

Let $F$ be a non-Archimedean local field with discrete valuation $\nu$,
let $\mathcal{O}$ be its valuation ring, and fix a uniformizer $\pi\in\mathcal{O}$.
Write $k=\mathcal{O}/\pi\mathcal{O}$ and $q=|k|$.
Let
\[
G=\operatorname{PGL}_3(F)=\mathrm{GL}_3(F)/F^\times,
\]
and let $\mathcal{B}$ denote the associated Bruhat--Tits building.
It is an affine building of type $\widetilde{A}_2$.
We write $(W,S)$ for the affine Coxeter system of type $\widetilde{A}_2$,
with $S=\{s_0,s_1,s_2\}$.

\subsection{The chamber system model \texorpdfstring{$G/I$}{G/I}}\label{subsec:GI-model}

Fix an Iwahori subgroup $I\le G$ (equivalently, the stabilizer of a chosen alcove in $\mathcal{B}$).
For each $s_i\in S$, let $P_i$ denote the corresponding maximal parahoric subgroup containing $I$.
Then, as in Section~\ref{subsec:bn-pair-building}:

\begin{itemize}
\item the set of chambers (alcoves) is $[\mathbf{c}]=G/I$;
\item two chambers $gI$ and $g'I$ are $s_i$-adjacent if and only if $gP_i=g'P_i$;
\item the Weyl distance is determined by Bruhat double cosets:
\[
\delta(gI,hI)=w \quad \Longleftrightarrow \quad g^{-1}h\in IwI.
\]
\end{itemize}

Thus, by Theorem~\ref{thm:coset-building}, the chamber system $G/I$ is a thick, locally finite $(W,S)$-building.

\begin{remark}\label{rem:panels-as-parahoric-cosets}
The $s_i$-panels through a fixed chamber $gI$ are parametrized by the cosets in $gP_i/I$.
In particular, thickness and local finiteness are controlled by the finite quotients $P_i/I$.
\end{remark}

\subsection{Simplicial realization via lattice classes in \texorpdfstring{$F^3$}{F3}}\label{subsec:simplicial-lattice}

We now recall the standard simplicial model, which is convenient for explicit coordinates and truncations.

\begin{definition}[Vertices and chambers]\label{def:bt-lattice-model}
Two rank-$3$ $\mathcal{O}$-lattices $L,L'\subset F^3$ are \emph{homothetic} if $L=aL'$ for some $a\in F^\times$.
Vertices of $\mathcal{B}$ are homothety classes $[L]$.

Three vertices $[L_1],[L_2],[L_3]$ form a $2$-simplex (an alcove/chamber) if there exist representatives
$L_i'\in [L_i]$ such that
\[
\pi L_1'\subset L_3'\subset L_2'\subset L_1'.
\]
\end{definition}

The action of $G$ on $F^3$ induces a simplicial action on $\mathcal{B}$ by $g[L]=[gL]$.
Moreover, $\mathcal{B}$ is (canonically) the simplicial realization of the chamber system $G/I$:
alcoves in $G/I$ correspond to $2$-simplices in $\mathcal{B}$, and parahoric cosets correspond to panels.

\begin{definition}[Vertex type map]\label{def:vertex-type-map}
Fix the standard lattice $L_0=\mathcal{O}^3\subset F^3$.
For a vertex $[L]\in V(\mathcal{B})$, choose $g\in \mathrm{GL}_3(F)$ such that $L=gL_0$ and define
\[
\tau([L]) \equiv \nu(\det g)\pmod{3}\in \mathbb{Z}/3\mathbb{Z}.
\]
\end{definition}

\begin{lemma}\label{lem:type-map-well-defined}
The map $\tau:V(\mathcal{B})\to \mathbb{Z}/3\mathbb{Z}$ in Definition~\ref{def:vertex-type-map} is well-defined.
Moreover, for any alcove (2-simplex) in $\mathcal{B}$, its three vertices have pairwise distinct types.
\end{lemma}

\begin{proof}
If $L=gL_0=g'L_0$, then $g^{-1}g'\in \mathrm{GL}_3(\mathcal{O})$, hence $\nu(\det(g^{-1}g'))=0$ and
$\nu(\det g)\equiv \nu(\det g')\pmod{3}$.
If we replace $L$ by a homothetic lattice $aL$ with $a\in F^\times$, then one may replace $g$ by $ag$,
so $\nu(\det(ag))=\nu(a^3\det g)\equiv \nu(\det g)\pmod{3}$, proving well-definedness on $[L]$.

For the second claim, a chamber corresponds to a chain
$\pi L_1'\subset L_3'\subset L_2'\subset L_1'$.
After choosing $g$ with $L_1'=gL_0$, the intermediate lattices correspond to standard index-$q$ and index-$q^2$
sublattices between $\pi L_0$ and $L_0$, so their determinants differ by valuation shifts $0,1,2$ modulo $3$.
Thus the three vertex types are distinct in $\mathbb{Z}/3\mathbb{Z}$.
\end{proof}

\begin{remark}\label{rem:vertex-types}
Each alcove in $\mathcal{B}$ has three vertices of distinct types in $\mathbb{Z}/3\mathbb{Z}$.
Consequently, an ordering of the vertices determines an orientation (clockwise or counterclockwise) of type increments.
This will be encoded using pointed chambers below.
\end{remark}

\subsection{Pointed chambers and directed pointed galleries}\label{subsec:pointed-chambers-galleries}

\begin{definition}[Pointed chamber]\label{def:pointed-chamber-pgl3}
A \emph{pointed chamber} is an ordered triple of vertices
\[
c=(v_1,v_2,v_3)
\]
such that $\{v_1,v_2,v_3\}$ is the vertex set of a chamber (2-simplex) in $\mathcal{B}$.
The order is part of the data.
\end{definition}

\begin{definition}[Type of a pointed chamber]\label{def:type-pointed-chamber-pgl3}
Let $c=(v_1,v_2,v_3)$ be a pointed chamber.
For $k\in\{1,2\}$, we say that $c$ has \emph{type $k$} if there exists $i\in\mathbb{Z}/3\mathbb{Z}$ such that
\[
(\tau(v_1),\tau(v_2),\tau(v_3))\equiv (i,\ i+k,\ i+2k)\pmod{3}.
\]
Equivalently, $c$ has type $k$ if
\[
\tau(v_2)-\tau(v_1)\equiv k
\quad\text{and}\quad
\tau(v_3)-\tau(v_2)\equiv k
\pmod{3}.
\]
\end{definition}

\begin{definition}[Type $k$ edge-adjacency]\label{def:type-k-edge-adjacency-pgl3}
Let $c=(v_1,v_2,v_3)$ and $c'=(v_1',v_2',v_3')$ be pointed chambers of the same type $k$.
We say that $c'$ is \emph{type $k$ edge-adjacent} to $c$ if
\[
v_1'=v_2,\qquad v_2'=v_3.
\]
Equivalently, $c'$ is obtained from $c$ by crossing the (unpointed) edge $\{v_2,v_3\}$ and keeping the same orientation.
\end{definition}

\begin{definition}[Type $k$ pointed chamber gallery]\label{def:type-k-pointed-gallery-pgl3}
A \emph{type $k$ pointed chamber gallery} of length $n$ is a sequence
\[
\mathbf{c}=(c_0,\ldots,c_{n-1},c_n),\qquad c_i=(v_{1,i},v_{2,i},v_{3,i}),
\]
such that $c_{i+1}$ is type $k$ edge-adjacent to $c_i$ for every $i$.
Equivalently,
\[
v_{2,i}=v_{1,i+1},\qquad v_{3,i}=v_{2,i+1}\quad\text{for all }i.
\]
\end{definition}

\begin{definition}[Tailless and closed]\label{def:tailless-closed}
A type $k$ pointed chamber gallery $\mathbf{c}=(c_0,\ldots,c_{n-1},c_n)$ is \emph{tailless} if
\[
v_{1,i}\neq v_{3,i+1}\quad\text{for all }i,
\]
and it is \emph{closed} if $c_n=c_0$ (equivalently, the last step $c_{n-1}\to c_n$ closes the gallery), i.e.,
\[
v_{1,0}=v_{2,n-1},\qquad v_{2,0}=v_{3,n-1}.
\]
\end{definition}

\begin{definition}[Shift equivalence]\label{def:shift-equivalence-pgl3}
For a closed gallery $\mathbf{c}=(c_0,\ldots,c_{n-1},c_n)$, define
\[
\sigma(\mathbf{c})=(c_1,\ldots,c_n,c_0).
\]
Two closed galleries are \emph{equivalent} if they differ by a cyclic shift:
$\mathbf{c}_1=\sigma^j(\mathbf{c}_2)$ for some integer $j$.
\end{definition}

\begin{remark}[Forgetting the order gives an ordinary gallery]\label{rem:pointed-refines-standard-gallery}
For a pointed chamber $c=(v_1,v_2,v_3)$, write $|c|=\{v_1,v_2,v_3\}$ for the underlying (unpointed) chamber.
If $\mathbf{c}=(c_0,\ldots,c_{n-1},c_n)$ is a type $k$ pointed chamber gallery, then
\[
(|c_0|,\ldots,|c_{n-1}|,|c_n|)
\]
is an (unpointed) chamber gallery in the sense of Definition~\ref{def:galleries}.
Thus pointed galleries refine the chamber-system language by introducing a directed state space.
\end{remark}

\begin{lemma}[The induced Coxeter type is cyclic]\label{lem:pointed-implies-coxeter-type-cyclic}
Let $\mathbf{c}=(c_0,\ldots,c_{n-1},c_n)$ be a type $1$ pointed chamber gallery and write $c_i=(v_{1,i},v_{2,i},v_{3,i})$.
Set $i_0=\tau(v_{1,1})\in \mathbb{Z}/3\mathbb{Z}$.
Then the underlying unpointed gallery $(|c_0|,\ldots,|c_{n-1}|,|c_n|)$ is of Coxeter type
\[
s_{i_0}\, s_{i_0+1}\, s_{i_0+2}\, s_{i_0}\, s_{i_0+1}\,\cdots
\]
(truncated to length $n$), where indices are taken modulo $3$.
For a type $2$ pointed chamber gallery, the induced Coxeter type is the reverse cyclic word.
\end{lemma}

\begin{proof}
For each $i$, the step $c_i\to c_{i+1}$ crosses the edge $\{v_{2,i},v_{3,i}\}$ and replaces the vertex $v_{1,i}$.
In an $\widetilde{A}_2$ building, the panel type is determined by the type of the omitted vertex.
Since $c_i$ has type $1$, we have
\[
\tau(v_{1,i}),\tau(v_{2,i}),\tau(v_{3,i}) \equiv (i_0+i-1,\ i_0+i,\ i_0+i+1)\pmod{3}.
\]
Hence the crossed panel at step $i$ has type $\tau(v_{1,i})\equiv i_0+i-1$, which corresponds to the generator
$s_{i_0+i-1}$ in our indexing. This yields the stated cyclic word.
The type $2$ case is analogous and reverses the cyclic orientation.
\end{proof}

\subsection{Thickness and local extension of type $1$ tailless pointed galleries}\label{subsec:local-extension-pgl3}

The link of any vertex of $\mathcal{B}$ is the spherical building of $\operatorname{PGL}_3(k)$,
namely the projective plane $\mathrm{PG}(2,q)$.
A panel corresponds to a partial flag, and it extends to a complete flag in exactly $q+1$ ways.
Hence each panel is contained in $q+1$ chambers, and $\mathcal{B}$ has thickness $q+1$.

\begin{lemma}\label{lem:local-extension-pgl3}
Let $(\widetilde{c}_0,\ldots,\widetilde{c}_{n-1},\widetilde{c}_n)$ be a type $1$ tailless pointed chamber gallery in $\mathcal{B}$.
Then there exist exactly $q$ distinct pointed chambers $\widetilde{c}_{n+1}$ such that
$(\widetilde{c}_0,\ldots,\widetilde{c}_n,\widetilde{c}_{n+1})$ is again a type $1$ tailless pointed chamber gallery.
\end{lemma}

\begin{proof}
Write $\widetilde{c}_n=(v_1,v_2,v_3)$.
To extend $\widetilde{c}_n$ by one type $1$ step, we must choose a vertex $v_4$ such that
$\{v_2,v_3,v_4\}$ is a chamber and the pointed chamber
\[
\widetilde{c}_{n+1}=(v_2,v_3,v_4)
\]
has type $1$. The type condition forces $\tau(v_4)=\tau(v_1)$ (since types advance cyclically by $+1$).
Now the edge $\{v_2,v_3\}$ is a panel, and there are exactly $q+1$ chambers containing it.
Among these $q+1$ choices, exactly one chamber is the ``backtracking'' chamber with third vertex $v_4=v_1$,
which yields the pointed chamber $(v_2,v_3,v_1)$ and violates the tailless condition
$v_1\neq v_{3,n+1}$ in Definition~\ref{def:tailless-closed}.
All remaining $q$ chambers give distinct vertices $v_4\neq v_1$, hence distinct pointed chambers
$\widetilde{c}_{n+1}=(v_2,v_3,v_4)$, and they are automatically tailless by construction.
\end{proof}


\section{Admissible galleries in the non-uniform quotient}\label{sec:3}
In this section, we describe the fundamental domain of $\Gamma$ and the image of tailless galleries in the quotient space by the standard non-uniform lattice $\Gamma$ of $G$.

\subsection{Fundamental domain} Let $\mathbb{F}_q$ be a finite field of order $q$. Let $\mathbb{F}_q[t]$ and $\mathbb{F}_q(t)$ be the ring of polynomials and the field of rational functions, respectively. The absolute value of the field $\mathbb{F}_q(t)$ is defined for any rational function $f=\frac{g}{h}$ by
$$\|f\|:=q^{\deg{g}-\deg{h}}.$$
The \textit{field of formal Laurent series} $\mathbb{F}_q(\!(t^{-1})\!)$ in $t^{-1}$ is the completion of $\mathbb{F}_q(t)$ with respect to $\|\cdot\|$, i.e.
$$\mathbb{F}_q(\!(t^{-1})\!):=\left\{\sum_{n=-N}^\infty a_nt^{-n}:N\in \mathbb{Z},a_n\in \mathbb{F}_q\right\}.$$
The valuation ring $\mathcal{O}$ is the subring of power series $\mathbb{F}_q[\![t^{-1}]\!]$
$$\mathbb{F}_q[\![t^{-1}]\!]:=\left\{\sum_{n=0}^\infty a_nt^{-n}:a_n\in \mathbb{F}_q\right\}.$$
Let $G:=\PGL_3(\mathbb{F}_q(\!(t^{-1})\!))$ and $K:=\PGL_3(\mathbb{F}_q[\![t^{-1}]\!])$. Denote by $\Gamma:=\operatorname{PGL}_3(,\mathbb{F}_q[t])$ the image of the canonical projection map $\operatorname{GL}_3(,\mathbb{F}_q[t])\rightarrow G.$

We recall that (e.g. Lemma 3.2 in \cite{HK1}) for any $g\in G$, there exists a unique pair of non-negative integers $(m,n)$ with $m\geq n$ such that 
$$g\in \Gamma \diag{t^m,t^n,1}K.$$
 Let $\mathsf{x}_{m,n}$ be the vertex corresponding to $\Gamma \diag{t^m,t^n,1}K$.
\begin{figure}[H]
\begin{center}
\begin{tikzpicture}[scale=0.52]
  \draw [thick](-5,0) -- (5.5,0);   \draw [thick](-0.5,+{sqrt(60.75)}) -- (-5,0);\draw[thick](-4,+{sqrt(3)}) -- (-3,0);\draw[thick] (-3,+{sqrt(12)}) -- (-1,0);\draw [thick](-2,{sqrt(27)}) -- (1,0);\draw[thick] (-1,{sqrt(48)})--(3,0);\draw[thick] (-4,+{sqrt(3})--(5.5,+{sqrt(3)});\draw[thick](-3,+{sqrt(12)})--(5.5,+{sqrt(12)});\draw[thick](-2,+{sqrt(27)})--(5.5,+{sqrt(27)});\draw[thick](-1,+{sqrt(48)})--(-1,+{sqrt(48)});\draw[thick](1.5,+{sqrt(60.75)})--(-3,0);\draw[thick](3.5,+{sqrt(60.75)})--(-1,0);\draw[thick](5.5,+{sqrt(60.75)})--(1,0);\draw[thick](5.5,+{sqrt(75)}/2)--(3,0);\draw[thick](5,0)--(0.5,+{sqrt(60.75)});\draw[thick] (-1,+{sqrt(48)})--(5.5,+{sqrt(48)});\draw[thick](5.5,+{sqrt(27)}/2)--(2.5,+{sqrt(60.75)});\draw[thick] (4.5,+{sqrt(60.75)})--(5.5,+{sqrt(147)}/2);\draw[thick] (5,0)--(5.5,+{sqrt(3)}/2);
\node at (-5,-0.3) {$\mathsf{x}_{0,0}$};\node at (-3,-0.3) {$\mathsf{x}_{1,0}$};\node at (-1,-0.3) {$\mathsf{x}_{2,0}$};\node at (1,-0.3) {$\mathsf{x}_{3,0}$};\node at(3,-0.3) {$\mathsf{x}_{4,0}$};\node at (5,-0.3) {$\mathsf{x}_{5,0}$};\node at (6.5,3) {$\cdots$};\node at (-4.6,1.9) {$\mathsf{x}_{1,1}$};\node at (-2.6,1.9) {$\mathsf{x}_{2,1}$};\node at (-0.6,1.9) {$\mathsf{x}_{3,1}$};\node at (1.4,1.9) {$\mathsf{x}_{4,1}$};\node at (3.4,1.9) {$\mathsf{x}_{5,1}$};\node at (-3.6,3.65) {$\mathsf{x}_{2,2}$};\node at (-1.6,3.65) {$\mathsf{x}_{3,2}$};\node at (0.4,3.65) {$\mathsf{x}_{4,2}$};\node at(2.4,3.65) {$\mathsf{x}_{5,2}$};\node at (4.4,3.65) {$\mathsf{x}_{6,2}$};\node at (-2.6,5.35) {$\mathsf{x}_{3,3}$};\node at (-0.6,5.35) {$\mathsf{x}_{4,3}$};\node at (1.4,5.35) {$\mathsf{x}_{5,3}$};\node at (3.4, 5.35) {$\mathsf{x}_{6,3}$};\node at (-1.6, 7.1) {$\mathsf{x}_{4,4}$};\node at (0.4, 7.1) {$\mathsf{x}_{5,4}$};\node at (2.4, 7.1) {$\mathsf{x}_{6,4}$};\node at (4.4, 7.1) {$\mathsf{x}_{7,4}$};
\fill (-5,0)    circle (3pt); \fill (-3,0)    circle (3pt); \fill (-1,-0)    circle (3pt);\fill (1,-0)    circle (3pt); \fill (3,0) circle (3pt);\fill(5,0) circle (3pt);\fill (-4,+{sqrt(3)})    circle (3pt); \fill (-3,+{sqrt(12)})    circle (3pt); \fill (-2,{sqrt(27)})    circle (3pt);\fill (-1,+{sqrt(48)})    circle (3pt); \fill (-2,+{sqrt(3)}) circle (3pt);\fill(-0,+{sqrt(3)}) circle (3pt);\fill (0,+{sqrt(3)}) circle (3pt);\fill(2,{sqrt(3)}) circle (3pt);\fill (4,+{sqrt(3)}) circle (3pt);\fill (-3,{sqrt(12)}) circle (3pt); \fill (-1,{sqrt(12)}) circle (3pt);\fill (1,{sqrt(12)}) circle (3pt);\fill(3,{sqrt(12)}) circle (3pt);\fill (3,{sqrt(12)}) circle (3pt);\fill(5,{sqrt(12)}) circle (3pt);\fill (-2,{sqrt(27)}) circle (3pt);\fill (0,{sqrt(27)}) circle (3pt);\fill (2,{sqrt(27)}) circle (3pt); \fill (4,{sqrt(27)}) circle (3pt);\fill(1,{sqrt(48)}) circle (3pt); \fill(3,{sqrt(48)}) circle (3pt);\fill (5,{sqrt(48)}) circle (3pt);
\end{tikzpicture}
\end{center}
\caption{The fundamental domain for $\Gamma\backslash\mathcal{B}$}\label{fig:funddom}
\end{figure}

Each vertex $\mathsf{x}_{m,n}$ is adjacent to $\mathsf{x}_{m',n'}$ if and only if the following holds:
\begin{displaymath}
\begin{cases} (m',n')\in\{(m\pm1,n),(m,n\pm1),(m\pm1,n\pm1)\} &\text{ if }m>n>0\\
(m',n')\in \{(m\pm1,n),(m,n+1),(m+1,n+1)\} &\text{ if }m>n=0\\
(m',n')\in \{(m+1,n),(m,n-1),(m\pm1,n\pm1)\} &\text{ if }m=n>0\\
(m',n')\in \{(1,0),(1,1)\}&\text{ if }m=n=0.
\end{cases}
\end{displaymath}

\subsection{Admissible galleries in the quotient $\Gamma \backslash \mathcal{B}$}\label{sec:3.2} In this subsection, we define the weight of pointed chambers to count the number of closed galleries arising as quotients of tailless galleries in $\mathcal{B}$. Since the action of $\Gamma$ preserves the color difference between the endpoints of each directed edge, the definition of type of pointed chamber and galleries in the quotient space are well defined, respectively.
 A gallery $\mathbf{c}$ in $\Gamma\backslash \mathcal{B}$ is called \emph{admissible} if it is the projection of tailless gallery in $\mathcal{B}.$
 
We define the weight of an adjacency step in a gallery $\mathbf{c}$. For a pointed chamber $\widetilde{c}$ of $\mathcal{B}$ and a pointed chamber $c'$ of $X$, let
$$w(\widetilde{c},c')=\#\{\widetilde{c}'\colon\widetilde{c}\textrm{ is one of the lifts }\textrm {of }c',(\widetilde{c},\widetilde{c}') \textrm{ is a tailless gallery in }\mathcal{B}\}.$$ 
Let $\widetilde{c}$ be a lift of $c$ in $\mathcal{B}$. The weight of an adjacency step $(c,c')$ is defined by $$w(c,c')=w(\widetilde{c},c').$$ 


\begin{lem}[Well-definedness of the weight]\label{lem:weight_welldefined}
Let $X=\Gamma\backslash\mathcal{B}$ and let $c,c'$ be edge-adjacent type~$1$ pointed chambers in $X$.
Fix a lift $\tilde c$ of $c$ to $\mathcal{B}$, and define $w_{\tilde c}(c,c')$ to be the number of
type~$1$ pointed chambers $\tilde c'$ in $\mathcal{B}$ such that:
\begin{enumerate}
\item $\tilde c'$ is edge-adjacent to $\tilde c$ in $\mathcal{B}$,
\item the projection of $\tilde c'$ to $X$ equals $c'$,
\item the step $\tilde c\to \tilde c'$ is tailless (in the sense of Definition~\ref{def:tailless-closed}).
\end{enumerate}
Then $w_{\tilde c}(c,c')$ is independent of the choice of the lift $\tilde c$.
In particular, the weight
\[
w(c,c'):=w_{\tilde c}(c,c')
\]
is well-defined.
\end{lem}

\begin{proof}
Let $\tilde c_1$ and $\tilde c_2$ be two lifts of $c$.
Then there exists $\gamma\in\Gamma$ such that $\gamma\tilde c_1=\tilde c_2$.
The map $\tilde c'\mapsto \gamma\tilde c'$ defines a bijection between the set of pointed chambers
$\tilde c'$ satisfying (1)--(3) for $\tilde c_1$ and the corresponding set for $\tilde c_2$:
\begin{itemize}
\item $\Gamma$ acts by simplicial automorphisms, hence preserves edge-adjacency;
\item the projection $\mathcal{B}\to \Gamma\backslash\mathcal{B}$ is $\Gamma$-invariant, so $\tilde c'$ projects to $c'$
iff $\gamma\tilde c'$ projects to $c'$;
\item taillessness is a local combinatorial condition and is preserved by automorphisms.
\end{itemize}
Therefore the two sets have the same cardinality, and $w_{\tilde c}(c,c')$ does not depend on the lift.
\end{proof}


Let $\Delta_{m,n}$ be the (unpointed) chamber with vertices
\[
\mathsf{x}_{m,n},\; \mathsf{x}_{m+1,n},\; \mathsf{x}_{m+1,n+1}.
\]
A \emph{type $1$ pointed chamber} is an \emph{ordered} triple of the vertices of $\Delta_{m,n}$ whose first vertex has type $1$.
Accordingly, $\Delta_{m,n}$ gives rise to three type $1$ pointed chambers, which we denote by
\begin{align*}
c_{m,n,1}=&\,(\mathsf{x}_{m,n},\,\mathsf{x}_{m+1,n},\,\mathsf{x}_{m+1,n+1}),\\
c_{m,n,2}=&\,(\mathsf{x}_{m+1,n},\,\mathsf{x}_{m+1,n+1},\,\mathsf{x}_{m,n}),\\
c_{m,n,3}=&\,(\mathsf{x}_{m+1,n+1},\,\mathsf{x}_{m,n},\,\mathsf{x}_{m+1,n}).
\end{align*}

Similarly, let $\nabla_{m,n}$ be the (unpointed) chamber with vertices
\[
\mathsf{x}_{m+1,n},\; \mathsf{x}_{m+1,n+1},\; \mathsf{x}_{m+2,n+1}.
\]
It also determines three type $1$ pointed chambers, denoted by
\begin{align*}
d_{m,n,1}=&\,(\mathsf{x}_{m+1,n},\,\mathsf{x}_{m+1,n+1},\,\mathsf{x}_{m+2,n+1}),\\
d_{m,n,2}=&\,(\mathsf{x}_{m+1,n+1},\,\mathsf{x}_{m+2,n+1},\,\mathsf{x}_{m+1,n}),\\
d_{m,n,3}=&\,(\mathsf{x}_{m+2,n+1},\,\mathsf{x}_{m+1,n},\,\mathsf{x}_{m+1,n+1}).
\end{align*}

For each type $1$ pointed chamber $c_{m,n,i}$ (resp.\ $d_{m,n,i}$), we list all type $1$ pointed chambers in $\mathcal{B}$
that share an edge with it (see Figures~\ref{fig4} and~\ref{fig:5}).
Under the quotient map to $\Gamma\backslash\mathcal{B}$, each such neighbor is represented by some
$c_{m',n',i'}$ or $d_{m'',n'',i''}$.
The weights of these adjacencies are as follows:
\[
\begin{array}{lll}
w(c_{m,n,1},c_{m,n,2})=q-1, &\qquad& w(c_{m,n,1},d_{m,n,1})=1,\\[2pt]
w(c_{m,n,2},c_{m,n,3})=q \ \text{if } m=n, && w(c_{m,n,2},d_{m-1,n,3})=q \ \text{if } m\neq n,\\[2pt]
w(c_{m,n,3},c_{m,n,1})=q \ \text{if } n=0, && w(c_{m,n,3},d_{m-1,n-1,2})=q \ \text{if } n\neq 0,
\end{array}
\]
and
\[
\begin{array}{lll}
w(d_{m,n,1},c_{m+1,n+1,1})=1, &\qquad& w(d_{m,n,1},d_{m,n,2})=q-1,\\[2pt]
w(d_{m,n,2},c_{m+1,n,3})=1, && w(d_{m,n,2},d_{m,n,3})=q-1,\\[2pt]
w(d_{m,n,3},c_{m,n,2})=q, && w(d_{m,n,3},d_{m,n,1})=0.
\end{array}
\]

\medskip
\noindent
Here the value $w(\cdot,\cdot)=0$ means that, although the two pointed chambers are edge-adjacent in the quotient,
there is \emph{no} lift of this adjacency to a \emph{tailless} (type $1$) step in the building $\mathcal{B}$.
Equivalently, the corresponding transition is forbidden by the taillessness condition, and hence it contributes
no admissible gallery and is assigned weight zero. In particular, the adjacency $(d_{m,n,3},d_{m,n,1})$ is of this
forbidden type, so we set $w(d_{m,n,3},d_{m,n,1})=0$.

\begin{center}
\begin{figure}[H]
\label{fig4}
\newcommand*\rows{6}
\begin{tikzpicture}[scale=2.1]
\draw[thick] (-2,0)--(0,0);
\filldraw[pattern=north east lines, pattern color=gray] (0,0)--(-1,+{sqrt(3)})--(-2,0);
\draw[thick]  (0,0)--(1,+{sqrt(3)})--(-1,+{sqrt(3)});
\draw[thick] (0,0)--(-0,+{sqrt(12)}/3)--(-1,+{sqrt(3)});
\draw[thick] (0,0)--(-1,-{sqrt(3)}/3)--(-2,0);
\draw[thick] (-2,0)--(-2,+{sqrt(12)}/3)--(-1,+{sqrt(3)});
\draw[thick] (-2,0)--(-3,{sqrt(3)})--(-1,{sqrt(3)});
\draw[thick] (-2,0)--(-1,-{sqrt(3)})--(0,0);
\node at(-2.7,0) {\Tiny{$\begin{pmatrix}t^m&0&0\\0&t^n&0\\0&0&1\end{pmatrix}K$}};
\node at(0.8,-0) {\Tiny{$\begin{pmatrix}t^{m+1}&0&0\\0&t^n&0\\0&0&1\end{pmatrix}K$}};
\node at(-1,2.1) {\Tiny{$\begin{pmatrix}t^{m+1}&0&0\\0&t^{n+1}&0\\0&0&1\end{pmatrix}K$}};
\node at(-2.8,1.2) {\Tiny{$\begin{pmatrix}t^{m}&t^{m+1}&0\\0&t^{n+1}&0\\0&0&1\end{pmatrix}K$}};
\node at(0.8,1.2) {\Tiny{$\begin{pmatrix}t^m&0&t^{m+1}\\0&t^n&0\\0&0&1\end{pmatrix}K$}};
\node at(1.2,2.1) {\Tiny{$\begin{pmatrix}t^{m+2}&0&0\\0&t^{n+1}&0\\0&0&1\end{pmatrix}K$}};
\node at(-3.2,2.1) {\Tiny{$\begin{pmatrix}t^{m}&0&0\\0&t^{n+1}&0\\0&0&1\end{pmatrix}K$}};
\node at(-1,-0.9) {\Tiny{$\begin{pmatrix}t^{m}&0&0\\0&t^{n-1}&t^n\\0&0&1\end{pmatrix}K$}};
\node at(-1,-2) {\Tiny{$\begin{pmatrix}t^{m}&0&0\\0&t^{n-1}&0\\0&0&1\end{pmatrix}K$}};
\end{tikzpicture}
\caption{The underlying chamber $\Delta_{m,n}$ of $c_{m,n,i}$}\label{fig:3}
\end{figure}
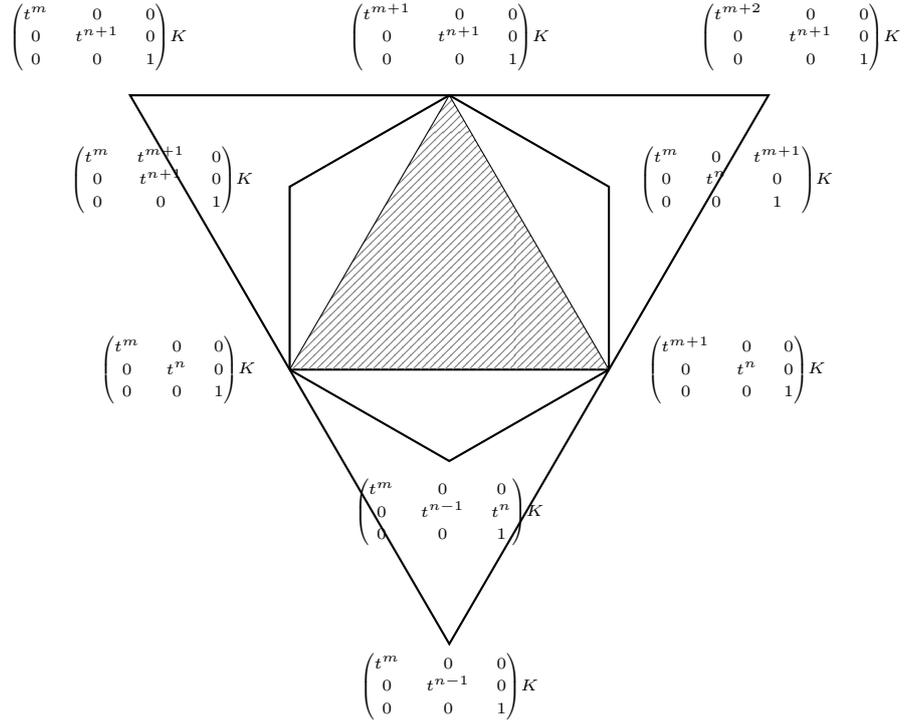
\end{center}
\begin{center}
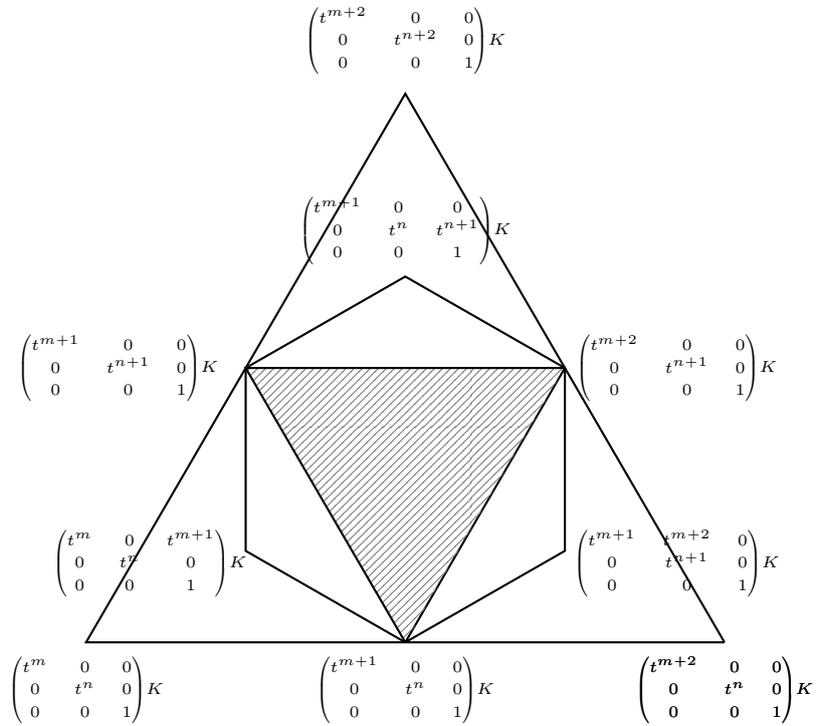
\begin{figure}[H]
\label{fig:5}
\newcommand*\rows{6}
\begin{tikzpicture}[scale=2.1]
\draw[thick] (-2,0)--(0,0);
\draw[thick] (0,0)--(-1,+{sqrt(3)})--(-2,0);
\draw[thick] (-2,0)--(-1,+{sqrt(3)}/3)--(-1,+{sqrt(3)});
\draw[thick] (-2,0)--(-3,+{sqrt(3)}/3)--(-3,+{sqrt(3)});
\filldraw[pattern=north east lines, pattern color=gray] (-2,0)--(-3,{sqrt(3)})--(-1,{sqrt(3)});
\draw[thick] (-2,0)--(-3,{sqrt(3)})--(-1,{sqrt(3)});
\draw[thick] (-2,0)--(-4,0)--(-3,+{sqrt(3)});
\draw[thick] (-3,{sqrt(3)})--(-2,+{sqrt(12)})--(-1,+{sqrt(3)});
\draw[thick] (-3,{sqrt(3)})--(-2,+{sqrt(48)}/3)--(-1,+{sqrt(3)});
\node at(-2,-0.3) {\Tiny{$\begin{pmatrix}t^{m+1}&0&0\\0&t^{n}&0\\0&0&1\end{pmatrix}K$}};
\node at(-4,-0.3) {\Tiny{$\begin{pmatrix}t^{m}&0&0\\0&t^{n}&0\\0&0&1\end{pmatrix}K$}};
\node at(0,-0.3) {\Tiny{$\begin{pmatrix}t^{m+2}&0&0\\0&t^{n}&0\\0&0&1\end{pmatrix}K$}};
\node at(0,-0.3) {\Tiny{$\begin{pmatrix}t^{m+2}&0&0\\0&t^{n}&0\\0&0&1\end{pmatrix}K$}};
\node at(-0.3,{sqrt(3)}) {\Tiny{$\begin{pmatrix}t^{m+2}&0&0\\0&t^{n+1}&0\\0&0&1\end{pmatrix}K$}};
\node at(-3.8,{sqrt(3)}) {\Tiny{$\begin{pmatrix}t^{m+1}&0&0\\0&t^{n+1}&0\\0&0&1\end{pmatrix}K$}};
\node at(-2,3.8) {\Tiny{$\begin{pmatrix}t^{m+2}&0&0\\0&t^{n+2}&0\\0&0&1\end{pmatrix}K$}};
\node at(-0.3,0.5) {\Tiny{$\begin{pmatrix}t^{m+1}&t^{m+2}&0\\0&t^{n+1}&0\\0&0&1\end{pmatrix}K$}};
\node at(-3.6,0.5) {\Tiny{$\begin{pmatrix}t^{m}&0&t^{m+1}\\0&t^{n}&0\\0&0&1\end{pmatrix}K$}};
\node at(-2,2.6) {\Tiny{$\begin{pmatrix}t^{m+1}&0&0\\0&t^{n}&t^{n+1}\\0&0&1\end{pmatrix}K$}};
\end{tikzpicture}
\caption{The underlying chamber $\nabla_{m,n}$ of $d_{m,n,i}$}\label{fig:4}
\end{figure}
\end{center}

\medskip
\noindent

The following lemma plays a key role in the proof of the determinant formula, as it
provides the required local finiteness of closed admissible gallery classes.

\begin{lem}[Finiteness of admissible closed gallery classes]\label{lem:3.1}For any positive integer $N$, there exist finitely many type 1 admissible closed gallery classes of length $N$ in the quotient $\Gamma\backslash\mathcal{B}$.
\end{lem}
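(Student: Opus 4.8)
The strategy is to exploit the explicit weight table for adjacent type~1 pointed chambers together with the combinatorial structure of the fundamental domain described in Figure~\ref{fig:funddom}. Observe first that every type~1 pointed chamber in $\Gamma\backslash\mathcal{B}$ is of the form $c_{m,n,i}$ or $d_{m,n,i}$ for some $m\geq n\geq 0$ and $i\in\mathbb{Z}/3\mathbb{Z}$, so each such chamber carries a well-defined \emph{height} $h(c_{m,n,i}):=m+n$ and $h(d_{m,n,i}):=m+n+1$. The key point is that the allowed transitions, read off from the weight list, never change this height by more than $1$ in a single step, and more importantly they exhibit a strict monotonicity in the cuspidal region: in particular $w(d_{m,n,3},d_{m,n,1})=0$, so once a gallery enters a $d$-chamber at index $3$ it cannot remain in the $d$-family, and the transitions $c\to d$, $d\to c$ together with the indices force a bounded drift behaviour. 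One checks directly that any admissible closed gallery must return to its starting height, and that along the way the index~$i$ and the $c$/$d$-type cycle in a controlled manner; this is exactly the monotonicity phenomenon highlighted in the remark after Theorem~\ref{thm:1.1}.

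The concrete steps are as follows. First I would record, from the two weight columns, the directed transition graph on the symbols $\{c_{m,n,i}\},\{d_{m,n,i}\}$, noting for each admissible step the change $(\Delta m,\Delta n)$ in the coordinates and the change in the index $i$; in particular I would isolate those steps that \emph{strictly increase} $m+n$ (the ``outgoing'' steps toward the cusp) and those that strictly decrease it. Second, using the adjacency rules for $v_{m,n}$ near the walls $m=n$ and $n=0$ (the four cases in the displayed system), I would show that a gallery which at some index has $m+n$ large can only decrease $m+n$ by a sequence of ``incoming'' steps, and that each full cycle of the index $i$ (length $3$) changes $m+n$ by a fixed amount depending only on whether the gallery is in the ``$c$-phase'' or the ``$d$-phase.'' Third, I would deduce that for a \emph{closed} gallery of length $N$, the coordinate $m+n$ is bounded above by a constant depending only on $N$ (roughly $N/2$ times the maximal per-step increase), since a net return to the start forbids unbounded excursions of length at most $N$. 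Combined with the trivial lower bound $m\geq n\geq 0$, this confines every closed admissible gallery of length $N$ to a finite subcomplex of $\Gamma\backslash\mathcal{B}$, inside which there are only finitely many pointed chambers and hence only finitely many galleries of length $N$, a fortiori finitely many classes.

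The main obstacle is the bookkeeping near the two walls $m=n$ and $n=0$, where the adjacency pattern degenerates (the weights $w(c_{m,n,2},c_{m,n,3})=q$ and $w(c_{m,n,3},c_{m,n,1})=q$ replace the generic $d$-transitions) and where a gallery can ``bounce'' off the boundary; one must verify that such bounces cannot be chained to manufacture a long closed gallery staying at bounded height but escaping to $n=0$ or $m=n$ and back arbitrarily often while still closing up within length $N$. I expect this to follow from a careful parity/period argument on the index $i$ together with the observation that each wall-bounce consumes at least one step and changes the coordinates in a way that must ultimately be compensated; once the height bound is established the remainder of the argument is immediate. An alternative, cleaner route would be to define an explicit Lipschitz ``Busemann-type'' function $\beta$ on the pointed-chamber graph that is non-decreasing along every admissible step and strictly increasing along a positive proportion of steps in the cusp, so that a closed gallery of length $N$ has $\beta$-oscillation $O(N)$, directly yielding the finite support; I would use whichever of the two formulations makes the wall analysis shortest.
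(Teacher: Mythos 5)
The paper's proof establishes finiteness by a \emph{forced-chain} argument: using the weight table, one reads off that, for $n>0$, the unique admissible predecessor of $c_{m,n,1}$ is $d_{m-1,n-1,1}$, whose unique predecessor is $c_{m-1,n-1,1}$, and so on, so that any closed gallery containing an $i=1$ chamber with coordinate $n$ is forced to contain a deterministic subgallery of length about $2n$ descending to the floor $n=0$; analogous forced chains on the $\{c_2,c_3,d_2,d_3\}$ subgraph (using that $w(d_{m,n,3},d_{m,n,1})=0$) and the horizontal chain $c_{m,n,2}\to d_{m-1,n,3}\to c_{m-1,n,2}\to\cdots$ handle the remaining cases and control $m-n$. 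The conclusion is that a closed gallery of length $N$ cannot contain any $c_{m,n,i}$ or $d_{m,n,i}$ with $n>N$ or $m>N$, which gives finiteness directly.

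Your proposal has a genuine gap in its central step. You define a height $h=m+n$ (resp.\ $m+n+1$), correctly observe that each admissible step changes $h$ by at most $1$, and then argue that since a closed gallery of length $N$ returns to its starting height, the height is ``bounded above by a constant depending only on $N$, roughly $N/2$.'' But the $\pm 1$ step-size bound only controls the \emph{oscillation} of $h$ relative to the (unknown) starting height $h_0$: it gives $|h-h_0|\le N/2$, not $h\le N/2$. The ``trivial lower bound $m\ge n\ge 0$'' gives $h\ge 0$ but does not interact with the oscillation estimate to produce an upper bound on $h_0$. Nothing in your argument rules out, say, a length-$N$ closed gallery that lives entirely in the region $n\ge 100$. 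The missing ingredient is precisely what the paper proves: that any closed admissible gallery is \emph{forced} to visit the boundary region near $n=0$ (and near $m=n$), because once you write down the transition graph, there are long deterministic chains with no branching, and those chains descend. Without this, confining the gallery to ``a finite subcomplex'' does not follow.

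Your sketched alternative---a function $\beta$ that is non-decreasing along \emph{every} admissible step and strictly increasing along some---is in the right spirit (it would force a closed gallery into the region where no strictly-increasing step exists), but you do not exhibit such a $\beta$, and the candidate $m+n$ does not qualify: the step $c_{m,n,3}\to d_{m-1,n-1,2}$ decreases $m+n$ by $1$, and $d_{m,n,3}\to c_{m,n,2}$ decreases it as well. To carry out this route you would have to construct a genuinely monotone potential on the full transition graph, and at that point you would essentially be re-deriving the forced-chain structure anyway. I would recommend replacing the oscillation argument by the predecessor-uniqueness observations above: from the weight table, identify the chambers whose admissible predecessor (or successor) is unique, show these chains push a gallery monotonically toward the floor, and conclude that a chamber deep in the cusp cannot occur in a short closed gallery.
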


\begin{proof}
Fix $N\ge 1$.  Let
\[
\mathbf{c}=(x_0,\dots,x_{N-1},x_N)
\]
be a type $1$ admissible closed gallery of length $N$ in $X=\Gamma\backslash\mathcal B$, so that each step
$x_j\to x_{j+1}$ (indices mod $N$) is one of the adjacency transitions listed in the weight table of
Subsection~4.2 (equivalently, $w(x_j,x_{j+1})\neq 0$).

\medskip
\noindent\textbf{Step 1: a uniform bound on the second coordinate $n$.}
We claim that every chamber occurring in $\mathbf{c}$ has second coordinate $n\le N$.

Indeed, suppose first that $\mathbf{c}$ contains a pointed chamber of the form $c_{m,n,1}$ or $d_{m,n,1}$ with $n\ge 1$.
From the weight table we have
\[
w(d_{m-1,n-1,1},c_{m,n,1})=1,\qquad w(c_{m,n,1},d_{m,n,1})=1,
\]
and, moreover, these are the only admissible incoming transitions into $c_{m,n,1}$ and $d_{m,n,1}$,
respectively.  Hence, in the cyclic word $\mathbf{c}$ the occurrence of $c_{m,n,1}$ (or $d_{m,n,1}$) forces the entire
backward chain
\[
d_{m-n,0,1},\,c_{m-n+1,1,1},\,d_{m-n+1,1,1},\,\dots,\,c_{m,n,1}\,(,\,d_{m,n,1})
\]
to appear consecutively in $\mathbf{c}$ (up to cyclic permutation).  In particular, $\mathbf{c}$ contains at least $n$ distinct steps,
so $n\le N$.

If, on the other hand, $\mathbf{c}$ contains no $c_{\ast,\ast,1}$ and no $d_{\ast,\ast,1}$, then $\mathbf{c}$ is supported entirely on
the set $\{c_{m,n,2},c_{m,n,3},d_{m,n,2},d_{m,n,3}\}$.  Inspecting the transition table shows that along any admissible
step within this set the coordinate $n$ never increases, and it strictly decreases whenever one leaves a chamber
$c_{m,n,3}$ with $n\neq 0$ (since $c_{m,n,3}\to d_{m-1,n-1,2}$ for $n\neq 0$).  Therefore, a closed gallery can exist
only if $n=0$ throughout.  In particular, again $n\le N$.

Thus every chamber in $\mathbf{c}$ has $n\le N$.

\medskip
\noindent\textbf{Step 2: a uniform bound on the difference $m-n$.}
Set $r=m-n\ge 0$.  We claim that every chamber occurring in $\mathbf{c}$ satisfies $r\le N$.

Assume $r_{\max}:=\max\{m-n : c_{m,n,i}\text{ or }d_{m,n,i}\text{ occurs in }\mathbf{c}\}>0$.
Since $\mathbf{c}$ is closed, it must contain a step that decreases the value of $m-n$ at some point; inspecting the weight
table, the \emph{only} admissible transition that decreases $m-n$ is
\[
c_{m,n,2}\longrightarrow d_{m-1,n,3}\qquad(m\neq n),
\]
which lowers $m-n$ by $1$.  Hence there exists an occurrence of some $c_{m,n,2}$ in $\mathbf{c}$ with $m-n=r_{\max}$ and
$m\neq n$.  For such an occurrence, the next step is forced:
\[
x_j=c_{m,n,2}\ \Rightarrow\ x_{j+1}=d_{m-1,n,3},
\]
and then the following step is also forced:
\[
d_{m-1,n,3}\longrightarrow c_{m-1,n,2}.
\]
Iterating this forced two-step pattern, we obtain that $\mathbf{c}$ contains (as a consecutive subgallery, up to cyclic shift)
the zig-zag
\[
c_{m,n,2},\,d_{m-1,n,3},\,c_{m-1,n,2},\,d_{m-2,n,3},\,\dots,\,c_{n+1,n,2},\,d_{n,n,3},\,c_{n,n,2},
\]
whose length is $2(m-n)+1=2r_{\max}+1$.  Therefore $N\ge 2r_{\max}+1$, in particular $r_{\max}\le N$.

Consequently, every chamber in $\mathbf{c}$ satisfies $m-n\le N$.

\medskip
\noindent By Steps~1--2, any length-$N$ admissible closed gallery is supported on the finite set of pointed chambers
\[
\{\,c_{m,n,i},\,d_{m,n,i}\ :\ 0\le n\le N,\ 0\le m-n\le N,\ i\in\{1,2,3\}\,\},
\]
which is finite.  Hence there are only finitely many type $1$ admissible closed gallery classes of length $N$ in $X$.
\end{proof}

\noindent
\textbf{Remark.}
In the non-uniform quotient $\Gamma\backslash \mathcal{B}$, a single adjacency step may admit
several distinct tailless lifts in the building $\mathcal{B}$.
The weight of an admissible gallery records precisely this multiplicity and is therefore
intrinsic to the non-uniform setting.
With this convention, the trace of $T^n$ coincides with the total weight of closed
admissible galleries of length $n$, explaining the trace--gallery correspondence in
Lemma~\ref{lem:4.1}.

\section{Chamber zeta function and a determinant formula}\label{sec:4}

In this section, we define the type~$1$ chamber zeta function for $\Gamma\backslash\mathcal{B}$
and establish a determinant expression in terms of the chamber transfer operator. Throughout the section, unless stated otherwise, ``type~$1$ closed galleries'' means closed pointed chamber
galleries whose every step is of step type~$1$ in the sense of Definition~\ref{def:type-k-pointed-gallery-pgl3}.

\subsection{Weights, zeta function, and the transfer operator}

Let $[\mathbf{c}]=(c_0,\ldots,c_{n-1},c_n)$ be a type~$1$ closed gallery class in $\Gamma\backslash\mathcal{B}$.
Its weight is defined by
\[
w([\mathbf{c}])=\prod_{j \,\mathrm{mod}\, n} w(c_j,c_{j+1}).
\]
The (weighted) type~$1$ chamber zeta function is
\[
Z_{\Gamma}(u)=\prod_{[\mathbf{p}]}
\left(1-w([\mathbf{p}])u^{\ell([\mathbf{p}])}\right)^{-1},
\]
where the product runs over all primitive type~$1$ closed admissible gallery classes,
and $\ell([\mathbf{p}])$ denotes the length of the class.

Let $\mathrm{C}(\Gamma\backslash\mathcal{B})$ be the set of all type~$1$ pointed chambers in $\Gamma\backslash\mathcal{B}$.
We consider the complex vector space of finitely supported formal linear combinations
\[
S\bigl(\mathrm{C}(\Gamma\backslash\mathcal{B})\bigr)
:=
\bigoplus_{c\in \mathrm{C}(\Gamma\backslash\mathcal{B})}\mathbb{C}c,
\]
equipped with the inner product
\[
\langle v,w\rangle := \sum_{c} v_c \overline{w_c}
\qquad
\text{for } v=\sum_c v_c c,\; w=\sum_c w_c c.
\]
(The sum is finite because $v$ and $w$ are finitely supported.)

Define the chamber transfer operator
\[
T: S\bigl(\mathrm{C}(\Gamma\backslash\mathcal{B})\bigr)\to
S\bigl(\mathrm{C}(\Gamma\backslash\mathcal{B})\bigr),
\qquad
Tc := \sum_{c'} w(c,c')\,c',
\]
where the sum runs over all type~$1$ pointed chambers $c'$ that are edge-adjacent to $c$.

\subsection{Traceability and the trace formula}

\begin{definition}\label{def:traceable}
Let $A$ be a linear operator on $S(\mathrm{C}(\Gamma\backslash\mathcal{B}))$.
We say that $A$ is \emph{traceable} if the set
\[
\mathrm{Fix}(A):=\{c\in \mathrm{C}(\Gamma\backslash\mathcal{B}) : \langle Ac,c\rangle \neq 0\}
\]
is finite. In that case we define
\[
\operatorname{Tr}(A) := \sum_{c\in \mathrm{Fix}(A)} \langle Ac,c\rangle.
\]
Similarly, if $A^n$ is traceable, we define $\operatorname{Tr}(A^n)$ in the same way.
\end{definition}

\begin{lem}\label{lem:4.1}
For every $n\in\mathbb{N}$, the operator $T^n$ is traceable, and
\[
\operatorname{Tr}(T^n)
=
\sum_{[\mathbf{c}]:\,\ell([\mathbf{c}])=n} w([\mathbf{c}])\,\ell([\mathbf{c}]_0),
\]
where $[\mathbf{c}]$ runs over all type~$1$ closed admissible gallery classes of length $n$,
and $[\mathbf{c}]_0$ denotes the unique primitive class such that $[\mathbf{c}]=[\mathbf{c}]_0^k$.
\end{lem}

\begin{proof}
By Lemma~\ref{lem:3.1}, for each fixed length $n$ there are only finitely many closed admissible
gallery classes of length $n$. In particular, only finitely many type~$1$ pointed chambers $c$
can lie on some closed admissible gallery of length $n$, hence only finitely many $c$ satisfy
$\langle T^n c,c\rangle\neq 0$. This proves that $T^n$ is traceable.

For a fixed $c$, the coefficient of $c$ in $T^n c$ is the sum of the weights of all length-$n$
closed admissible galleries that start and end at $c$, i.e. the sum of $w([\mathbf{c}])$ over all
classes $[\mathbf{c}]$ of length $n$ containing $c$. Summing $\langle T^n c,c\rangle$ over all $c$,
each class $[\mathbf{c}]=[\mathbf{c}]_0^k$ contributes exactly $\ell([\mathbf{c}]_0)$ times, once for
each cyclic shift along the underlying primitive loop. Therefore,
\[
\operatorname{Tr}(T^n)
=
\sum_{[\mathbf{c}]:\,\ell([\mathbf{c}])=n} w([\mathbf{c}])\,\ell([\mathbf{c}]_0),
\]
as claimed.
\end{proof}

\begin{prop}\label{prop:4.2}
The type~$1$ chamber zeta function is formally described by
\[
Z_{\Gamma}(u)=
\exp\left(\sum_{n=1}^{\infty}\frac{\operatorname{Tr}(T^n)}{n}u^n\right)
\in \mathbb{C}[\![u]\!].
\]
\end{prop}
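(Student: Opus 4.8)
The plan is to derive the stated exponential formula from Lemma~\ref{lem:4.1} together with the standard combinatorial identity relating a product over primitive classes to a sum over all classes. First I would rewrite the logarithm of the zeta function: taking $\log$ of
\[
Z_\Gamma(u)=\prod_{\mathcal C_0}\bigl(1-w(\mathcal C_0)u^{\ell(\mathcal C_0)}\bigr)^{-1},
\]
where the product is over primitive type~1 admissible classes, and expanding $-\log(1-x)=\sum_{k\ge1}x^k/k$, gives
\[
\log Z_\Gamma(u)=\sum_{\mathcal C_0}\sum_{k=1}^\infty \frac{w(\mathcal C_0)^k\,u^{k\ell(\mathcal C_0)}}{k}.
\]
The key observation is that the pair $(\mathcal C_0,k)$ ranges bijectively over all admissible closed gallery classes $\mathcal C$ (not necessarily primitive), via $\mathcal C=\mathcal C_0^k$, and under this bijection $w(\mathcal C)=w(\mathcal C_0)^k$ (the weight is multiplicative under winding, directly from the product definition of $w(\mathcal C)$) and $\ell(\mathcal C)=k\,\ell(\mathcal C_0)$, while $k=\ell(\mathcal C)/\ell(\mathcal C_0)$. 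Hence the double sum becomes $\sum_{\mathcal C} w(\mathcal C)\,u^{\ell(\mathcal C)}\,\ell(\mathcal C_0)/\ell(\mathcal C)$, where $\mathcal C$ now runs over all admissible closed gallery classes and $\mathcal C_0$ denotes the primitive class underlying $\mathcal C$.

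Next I would group this sum by length. Writing $n=\ell(\mathcal C)$ and collecting terms,
\[
\log Z_\Gamma(u)=\sum_{n=1}^\infty \frac{u^n}{n}\sum_{\mathcal C:\,\ell(\mathcal C)=n} w(\mathcal C)\,\ell(\mathcal C_0).
\]
By Lemma~\ref{lem:4.1}, the inner sum is exactly $\operatorname{Tr}(T^n)$, and in particular it is finite (this is where Lemma~\ref{lem:3.1} is invoked, guaranteeing that only finitely many admissible classes of each length exist, so the rearrangement of the double series is legitimate). Substituting, we obtain $\log Z_\Gamma(u)=\sum_{n\ge1}\operatorname{Tr}(T^n)u^n/n$, and exponentiating yields the proposition.

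The main point requiring care is the justification of the rearrangement: passing from the product over primitive classes to the double sum over $(\mathcal C_0,k)$ and then regrouping by total length $n$ all require absolute convergence for $u$ in a neighborhood of $0$. This is where I would lean on the finiteness of $\operatorname{Tr}(T^n)$ from Lemma~\ref{lem:4.1}, combined with the crude bound that $w(c,c')\le q$ for every adjacency pair (evident from the explicit weight table in Section~\ref{sec:3.2}), which together with the local finiteness of the quotient complex bounds the number and weight of admissible classes of length $n$ by $C^n$ for some constant $C$; this secures a positive radius of convergence and hence licenses all the formal manipulations. I would therefore state the identity ``formally'' as in the proposition, with the understanding that it holds as an identity of power series and, on a suitable disc $|u|<\rho$, as an identity of holomorphic functions. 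Everything else — the multiplicativity of $w$ under powers and the bijection $(\mathcal C_0,k)\leftrightarrow\mathcal C$ — is routine and follows immediately from the definitions of primitive class, multiplicity, and the power $\mathcal C_0^k$ given in Section~\ref{sec:1}.
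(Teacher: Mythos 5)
Your argument is essentially identical to the paper's proof: both expand $\log Z_\Gamma(u)$ over primitive classes, resum via the bijection $(\mathcal{C}_0,k)\leftrightarrow\mathcal{C}=\mathcal{C}_0^k$, regroup by length $n$, and invoke Lemma~\ref{lem:4.1} to identify the inner sum with $\operatorname{Tr}(T^n)$. Your added remarks on absolute convergence and the crude bound $w(c,c')\le q$ are reasonable elaborations but do not change the route; the proposal is correct.
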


\begin{proof}
As in \cite{DK18,HK3}, taking logarithms and expanding $\log(1-x)$ as a formal power series gives
\[
Z_{\Gamma}(u)^{-1}
=
\prod_{[\mathbf{c}]_0}\left(1-w([\mathbf{c}]_0)u^{\ell([\mathbf{c}]_0)}\right)
=
\exp\left(-\sum_{[\mathbf{c}]_0}\sum_{m\ge1}\frac{w([\mathbf{c}]_0)^m}{m}u^{m\ell([\mathbf{c}]_0)}\right).
\]
Re-indexing the sum over all closed classes $[\mathbf{c}]=[\mathbf{c}]_0^m$ yields
\[
Z_{\Gamma}(u)^{-1}
=
\exp\left(-\sum_{n\ge1}\frac{u^n}{n}
\sum_{[\mathbf{c}]:\,\ell([\mathbf{c}])=n} w([\mathbf{c}])\,\ell([\mathbf{c}]_0)\right)
=
\exp\left(-\sum_{n\ge1}\frac{\operatorname{Tr}(T^n)}{n}u^n\right),
\]
and the claim follows.
\end{proof}

\subsection{Determinant definition}

\begin{definition}\label{def:determinant}
We define the (formal) determinant of $I-uT$ by
\[
\det(I-uT)
:=
\exp\left(-\sum_{n=1}^{\infty}\frac{\operatorname{Tr}(T^n)}{n}u^n\right)
\in 1+u\,\mathbb{C}[\![u]\!].
\]
\end{definition}

\begin{prop}[Determinant formula]\label{prop:det-formula}
With the determinant defined in Definition~\ref{def:determinant}, we have the identity
\[
Z_{\Gamma}(u)=\det(I-uT)^{-1}
\qquad\text{in }\mathbb{C}[\![u]\!].
\]
\end{prop}

\begin{proof}
This is immediate from Proposition~\ref{prop:4.2} and Definition~\ref{def:determinant}.
\end{proof}

\begin{coro}[Rationality and meromorphic continuation]\label{coro:rationality}
For $\Gamma=\mathrm{PGL}_3(\mathbb{F}_q[t])$ acting on the Bruhat--Tits building $\mathcal{B}$
of $G=\mathrm{PGL}_3(\mathbb{F}_q(\!(t^{-1})\!))$, the type~$1$ chamber zeta function
\[
Z_{\Gamma}(u)=\prod_{[\mathbf{c}]}\left(1-w([\mathbf{c}])u^{\ell([\mathbf{c}])}\right)^{-1}
\]
defines a well-defined formal power series in $u$ with constant term $1$, and it satisfies
\[
Z_{\Gamma}(u)=\det(I-uT)^{-1}
\qquad\text{in }\mathbb{C}[\![u]\!].
\]
Moreover, in Section~5 we compute $\det(I-uT)$ explicitly (equivalently, $Z_\Gamma(u)$),
and in particular $Z_{\Gamma}(u)$ extends to a rational function of $u$, hence admits
meromorphic continuation to $\mathbb{P}^1(\mathbb{C})$.
\end{coro}

\subsection{Bridge to Section~\ref{sec:5}: truncations and finite matrix determinants}\label{subsec:bridge_45}

In Section~\ref{sec:5}, we compute $\det(I-uT)$ (equivalently, $Z_\Gamma(u)$) by approximating the
infinite quotient $X=\Gamma\backslash\mathcal{B}$ with finite truncations and passing to a stabilized limit.
We briefly record the precise truncation scheme used there and explain why it captures the determinant of Section~\ref{sec:4}.

For $k\ge 0$, let $X_k$ be the subcomplex of $X$ consisting of vertices $v_{m,n}$ with $n\le k$,
and let $T_k$ denote the restriction of the transfer operator $T$ to $X_k$ as in Section~\ref{sec:5}.
For $N\ge 0$, let $X_{k,N}$ be the finite subcomplex of $X_k$ consisting of vertices $v_{m,n}$ with $m\le N$.
Let $M_{k,N}(u)$ be the matrix representation of the restriction of $I-uT_k$ to $X_{k,N}$
(constructed explicitly in Section~\ref{sec:5}).

\begin{prop}\label{prop:bridge_traces_to_matrices}
For each $n\ge 1$, there exist integers $k(n)$ and $N(n)$ such that for all $k\ge k(n)$ and $N\ge N(n)$,
the coefficient of $u^n$ in $\det(M_{k,N}(u))^{-1}$ equals the coefficient of $u^n$ in $Z_\Gamma(u)$.
Equivalently, we have coefficientwise stabilization in $\mathbb{C}[\![u]\!]$:
\[
Z_\Gamma(u)
=
\lim_{k\to\infty}\;\lim_{N\to\infty}\det(M_{k,N}(u))^{-1}.
\]
\end{prop}

\begin{proof}[Proof]
By Lemma~\ref{lem:3.1}, for each fixed length $n$ there are only finitely many closed admissible gallery classes of
length $n$ in $X$. In particular, all chambers appearing in such galleries lie in a finite region of $X$:
there exist bounds $k(n)$ and $N(n)$ such that every closed admissible gallery of length $n$ is contained in $X_{k(n),N(n)}$.
Hence the weighted counts of length-$n$ closed galleries (equivalently, $\operatorname{Tr}(T^n)$ from Lemma~\ref{lem:4.1})
are the same as those obtained from the truncated dynamics inside $X_{k,N}$ once $k\ge k(n)$ and $N\ge N(n)$.
On the other hand, for each fixed $(k,N)$ the complex $X_{k,N}$ is finite, so $\det(M_{k,N}(u))$ is a genuine polynomial
and the standard finite-dimensional identity expresses $\det(M_{k,N}(u))^{-1}$ as the exponential generating series of
the traces of the corresponding truncated powers. Comparing coefficients yields the stated stabilization.
\end{proof}

In Section~\ref{sec:5}, we compute $\det(M_{k,N}(u))$ by a block-matrix reduction (Schur complement),
show that the limit as $N\to\infty$ stabilizes at a finite matrix $A_{k,0}(u)$, and finally pass to the limit $k\to\infty$.
This produces the explicit closed formula for $\det(I-uT)$ and completes the proof of Theorem~\ref{thm:1.1}.

\section{Proof of Theorem \ref{thm:1.1}}\label{sec:5}
In this section, we prove the main theorem of this paper. The method for the proof is similar to \cite{HK3}. We compute the chamber zeta function of certain truncated subcomplexes and take the limit.

By Proposition~\ref{prop:bridge_traces_to_matrices}, the coefficients of $Z_\Gamma(u)$ are recovered
from the finite truncations $X_{k,N}$ by coefficientwise stabilization:
for each fixed degree $n$, the coefficient of $u^n$ is already determined by $\det(M_{k,N}(u))^{-1}$
once $k$ and $N$ are sufficiently large (depending on $n$).
Thus it suffices to compute $\det(M_{k,N}(u))$ explicitly and then identify its stabilized limits as $N\to\infty$ and $k\to\infty$.

\subsection{The determinant of $I-uT_k$} Let $X_k$ be the subcomplex of $X$ consisting of vertices with $n\leq k$. The restriction $T_k$ of $T$ to $X_k$ is defined by 
\begin{align*}
T_k c_{m,n,1}&=(q-1)c_{m,n,2}+d_{m,n,1}\\
T_k c_{m,n,2}&=
\begin{cases}
qc_{m,n,3} & \text{if } m=n,\\
qd_{m-1,n,3} & \text{if } m\neq n
\end{cases}\\
T_k c_{m,n,3}&=
\begin{cases}
qc_{m,n,1} & \text{if } n=0,\\
qd_{m-1,n-1,2} & \text{if } n\neq 0
\end{cases}\\
T_k d_{m,n,1}&=
\begin{cases}
c_{m+1,n+1,1}+(q-1)d_{m,n,2} & \text{if } n<k-1,\\
(q-1)d_{m,n,2} & \text{if } n=k-1
\end{cases}\\
T_k d_{m,n,2}&=c_{m+1,n,3}+(q-1)d_{m,n,3}\\
T_k d_{m,n,3}&=qc_{m,n,2}.
\end{align*}
where $c_{m,n,i}$ and $d_{m,n,i}$ are the type 1 pointed chambers in Section \ref{sec:3.2}.

To obtain the matrix representation of a subcomplex of $X_k$, we define $6\times 6$ matrices $a_1,a_2,a_3,a_4,b,c,d,e$ as follows:

\begin{align*}
&\begin{aligned}
(a_1(u))_{ij} &=
\begin{cases}
-qu & \text{if } (i,j)=(1,3),\\
(a_3(u))_{ij} & \text{otherwise,}
\end{cases}\\
(a_3(u))_{ij} &=
\begin{cases}
-qu & \text{if } (i,j)=(3,2),\\
(a_4(u))_{ij} & \text{otherwise,}
\end{cases}
\end{aligned}
\qquad
\begin{aligned}
(a_2(u))_{ij} &=
\begin{cases}
-qu & \text{if } (i,j)=(1,3),\\
(a_4(u))_{ij} & \text{otherwise.}
\end{cases}
\end{aligned}
\\[1.2ex]
&(a_4(u))_{ij}=
\begin{cases}
1 & \text{if } i=j,\\
-u & \text{if } (i,j)=(4,1),\\
-(q-1)u & \text{if } (i,j)=(2,1),(5,4),(6,5),\\
-qu & \text{if } (i,j)=(2,6),\\
0 & \text{otherwise,}
\end{cases}
\\[1.2ex]
&\begin{aligned}
(b(u))_{ij} &=
\begin{cases}
-qu & \text{if } (i,j)=(5,3),\\
0 & \text{otherwise,}
\end{cases}\\
(d(u))_{ij} &=
\begin{cases}
-qu & \text{if } (i,j)=(6,2),\\
0 & \text{otherwise.}
\end{cases}
\end{aligned}
\qquad
\begin{aligned}
(c(u))_{ij} &=
\begin{cases}
-u & \text{if } (i,j)=(1,4),\\
0 & \text{otherwise,}
\end{cases}\\
(e(u))_{ij} &=
\begin{cases}
-u & \text{if } (i,j)=(3,5),\\
0 & \text{otherwise.}
\end{cases}
\end{aligned}
\end{align*}

Using the matrices $a_1,a_2,a_3,a_4,b,c,d$ and $e$, we define $k\times k$ block matrices $A_k,B_k,C_k,D_k$ as follows:
\begin{equation*}
\begin{split}
(A_k(u))_{ij}&=\begin{cases}a_1(u)&\text{ if }i=j=1\\
a_2(u)&\text{ if }i=j>1\\
b(u)&\text{ if }j=i+1\\
c(u)&\text{ if }i=j+1\\
0&\text{ otherwise}
\end{cases}\\
(C_k(u))_{ij}&=\begin{cases}d(u)&\text{ if }i=j\\
0&\text{ otherwise}
\end{cases}
\end{split}
\quad\quad
\begin{split}
(B_k(u))_{ij}&=\begin{cases}a_3(u)&\text{ if }i=j=1\\
a_4(u)&\text{ if }i=j>1\\
b(u)&\text{ if }j=i+1\\
c(u)&\text{ if }i=j+1\\
0&\text{ otherwise}
\end{cases}\\
(D_k(u))_{ij}&=\begin{cases}e(u)&\text{ if }i=j\\
0&\text{ otherwise}.
\end{cases}
\end{split}
\end{equation*}
More concretely, it can be expressed as

\[
A_k(u)=
\begin{pmatrix}
a_1(u) & b(u)   &        &        &  \\
c(u)   & a_2(u) & b(u)   &        &  \\
       & c(u)   & a_2(u) & \ddots &  \\
       &        & \ddots & \ddots & b(u) \\
       &        &        & c(u)   & a_2(u)
\end{pmatrix}
\quad\text{(a $k\times k$ block matrix with $6\times 6$ blocks).}
\]

\[
B_k(u)=
\begin{pmatrix}
a_3(u) & b(u)   &        &        &  \\
c(u)   & a_4(u) & b(u)   &        &  \\
       & c(u)   & a_4(u) & \ddots &  \\
       &        & \ddots & \ddots & b(u) \\
       &        &        & c(u)   & a_4(u)
\end{pmatrix}
\quad\text{(a $k\times k$ block matrix with $6\times 6$ blocks).}
\]

\[
C_k(u)=
\begin{pmatrix}
d(u) &      &      & \\
     & d(u) &      & \\
     &      & \ddots & \\
     &      &      & d(u)
\end{pmatrix} \quad\textrm{ and }\quad D_k(u)=
\begin{pmatrix}
e(u) &      &      & \\
     & e(u) &      & \\
     &      & \ddots & \\
     &      &      & e(u)
\end{pmatrix}.
\]

 Let $X_{k,N}$ be the subcomplex of $X_k$ whose vertices $v_{m,n}$ satisfies $m\leq N.$ The matrix representation $M_{k,N}$ of the restriction of $I-uT_{k}$ to $X_{k,N}$ is given by
\begin{equation*}
M_{k,N}(u)=\begin{cases}
A_k(u)&\text{ if }i=j=1\\
B_k(u)&\text{ if }i=j>1\\
C_k(u)&\text{ if }j=i+1\\
D_k(u)&\text{ if }i=j+1\\
0&\text{ otherwise}.
\end{cases}
\end{equation*}
Let $B_{k,1}(u):=B_k(u)$ and 
$$B_{k,\ell+1}(u)=B_k(u)-C_k(u)B_{k,\ell}(u)^{-1}D_k(u).$$
Then the matrix $B_{k,\ell}(u)$ is of the form
\begin{equation*}
(B_{k,\ell}(u))_{ij}=\begin{cases}a_{k,(s,t),\ell}(u)&\text{ if }(i,j)=(6s,6t-1)\\
B_{k}(u)_{ij}&\text{ otherwise},\end{cases}
\end{equation*}
where
\begin{equation*}
a_{k,(s,t),\ell}=\begin{cases}
-(q-1)u-q^2(q-1)u^4+\displaystyle\sum_{i=1}^k a_{k,(1,i),\ell-1}q^{3}(q-1)u^{2i+4}&\text{if }(s,t)=(1,1)\\
-q^{2}(q-1)u^{2s+2}+\displaystyle\sum_{i=1}^k a_{k,(s,i),\ell-1}q^{3}(q-1)u^{2i+4}&\text{if }s\neq 1,t=1\\
-(q-1)u+a_{k,(s,t-1),\ell-1}q^3u^4&\text{if } s=t>1\\
a_{k,(s,t-1),\ell-1}q^3u^4&\text{if } s\neq t\text{ and }s>1.
\end{cases}
\end{equation*}

By the construction of the sequence of matrices $B_{k,\ell}$, we have
\begin{equation*}
\begin{pmatrix}
I&-C_k(u)B_{k,\ell}(u)^{-1}\\
0&B_{k,\ell}(u)^{-1}
\end{pmatrix}
\begin{pmatrix}
B_{k}(u)&C_k(u)\\
D_{k}(u)&B_{k,\ell}(u)
\end{pmatrix}
=\begin{pmatrix}
B_{k,\ell+1}(u)&0\\
B_{k,\ell}(u)^{-1}D_{k}(u)&I
\end{pmatrix}.
\end{equation*}
Let $A_{k,N}$ be a matrix given by
$$A_{k,N}(u)=A_k-C_k(u)B_{k,N-1}(u)^{-1}D_k(u).$$
The row reduced echelon form $B_{k,\ell}$ is the $6k\times 6k$ identity matrix $I$. This implies that $\det(B_{k,\ell}(u))=1$.
Thus the determinant of $M_{k,N}(u)$ is equal to the determinant of $A_{k,N}(u)$.

The matrix $A_{k,N}(u)$ is given by
\begin{equation*}
(A_{k,N}(u))_{ij}=\begin{cases}a_{k,(s,t),N}(u)&\text{ if }(i,j)=(6s,6t-1)\\
(A_{k}(u))_{ij}&\text{ otherwise}.\end{cases}
\end{equation*}
The determinant of $I-uT_k$ is the limit of the determinant $A_{k,N}(u)$. Let $A_{k,\infty}(u)$ be the limit of $A_{k,N}(u)$ and $a_{k,(s,t)(u)}$ the limit of $a_{k,(s,t),\ell}(u)$. For $t\geq 2$, 
\begin{equation*}
a_{k,{(s,t)}}(u)=\begin{cases}
-(q-1)u+q^3u^4a_{k,(s,t-1)}&\text{if }s=t\\
q^3u^4a_{k,{s,t-1}}&\text{if }s\neq t.
\end{cases}
\end{equation*}
This shows that 
$$\det (A_{k,\infty}(u))=\det( A_{k,0}(u)),$$
where
\begin{equation*}
(A_{k,0}(u))_{ij}=\begin{cases}
a_{k,(s,1)}(u)&\text{if }(i,j)=(6s,5)\\
-q^3u^4&\text{if }(i,j)=(6s-1,6s+5)\\
(A_{k}(u))_{ij}&\text{otherwise}.
\end{cases}
\end{equation*}
Using elementary row operation, the determinant of $A_{k,0}$ is equal to the determinant of the following matrix
\begin{equation*}
\begin{pmatrix}
1                        &0    &-qu&0&0&0\\
-(q-1)u               &1   &0    &0&0&-qu\\
0                        &-qu&1     &0&0&0\\
-u                       &0   &0      &1&0&0\\
-\sum_{i=1}^{2k-1}q^{2i-2}(q-1)u^{3i-1}&0&0&0&1+\sum_{i=2}^{k}q^{4i-5}u^{4i-5}a_{k,(i,1)}(u)&0\\
0&0&0&0&a_{k,(1,1)}(u)&1
\end{pmatrix}.
\end{equation*}
The determinant of $A_{k,0}$ is as follows:
\begin{equation*}
(1-q^2(q-1)u^3)\biggl(1+\sum_{i=2}^{k}q^{4i-5}u^{4i-5}a_{k,(i,1)}(u)\biggr)+q^3u^3a_{k,(1,1)}\sum_{i=1}^{2k-1}q^{2i-2}(q-1)u^{3i-1}.
\end{equation*}

\subsection{The determinant of $I-uT$} For any $t\geq 2$, we obtain
$$a_{k,(1,t)}(u)=q^3u^4a_{k,(1,t-1)}(u)=q^6u^8a_{k,(1,t-2)}(u)=\cdots=q^{3t-3}u^{4t-4}a_{k,(1,1)}(u).$$
Using this, we have
\begin{equation*}
\begin{split}
a_{k,(1,1)}(u)
&=-(q-1)u-q^2(q-1)u^4+a_{k,(1,1)}\sum_{i=1}^k q^{3i}(q-1)u^{6i}
\end{split}
\end{equation*}
and 
\begin{equation*}
a_{k,(1,1)}(u)=\frac{-(q-1)u-q^2(q-1)u^4}{1-\displaystyle\sum_{i=1}^kq^{3i}(q-1)u^{6i}}.
\end{equation*}
The sequence $a_{k,(1,1)}$ converges to
$$a_{(1,1)}=\frac{-(1-q^3u^6)((q-1)u+q^2(q-1)u^4)}{1-q^4u^6}.$$
For any $s,t,$ with $s\geq 2$ and $t< s,$
$$a_{k,(s,t)}(u)=q^3u^4a_{k,(s,t-1)}(u)=q^6u^8a_{k,(1,t-2)}(u)=\cdots=q^{3t-3}u^{4t-4}a_{k,(s,1)}(u).$$
For any $s,t$ with $s\geq 2$ and $t\geq s$,
\begin{equation*}
\begin{split}
a_{k,(s,t)}(u)&=q^3u^4a_{k,(s,t-1)}(u)=\cdots=q^{3(t-s)}u^{4(t-s)}a_{k,(s,s)}(u)\\
&=-q^{3(t-s)}(q-1)u^{4(t-s)+1}+q^{3(t-s+1)}u^{4(t-s+1)}a_{k,(s,s-1)}(u)\\
&=-q^{3(t-s)}(q-1)u^{4(t-s)+1}+q^{3(t-1)}u^{4(t-1)}a_{k,(s,1)}(u).
\end{split}
\end{equation*}
Similarly, 
\begin{equation*}
\begin{split}
a_{k,(s,1)}=&-q^{2}(q-1)u^{2s+2}+\displaystyle\sum_{i=1}^k a_{k,(s,i)}q^{3}(q-1)u^{2i+4}\\
=&-q^{2}(q-1)u^{2s+2}+\displaystyle\sum_{i=1}^k a_{k,(s,1)}q^{3i}(q-1)u^{6i}\\
&-\sum_{i=s}^kq^{3(i-s+1)}(q-1)^2u^{6i-4s+5}
\end{split}
\end{equation*}
and 
\begin{equation*}
a_{k,(s,1)}=\frac{-q^{2}(q-1)u^{2s+2}-\displaystyle\sum_{s}^kq^{3(i-s+1)}(q-1)u^{6i-4s+5}}{1-\displaystyle\sum_{i=1}^k q^{3i}(q-1)^2u^{6i}}.
\end{equation*}
The sequence $a_{k,(s,1)}$ converges to 
\begin{equation*}
a_{(s,1)}=\frac{-q^2(q-1)u^{2s+2}(1-q^3u^6)-q^3(q-1)^2u^{2s+5}}{1-q^4u^6}.
\end{equation*}
Using the limit of $\det(A_{k,0})$, the determinant of $I-uT$ is 
\begin{equation*}
\lim_{k\rightarrow \infty}\det(A_{k,0})=\frac{(1-q^3u^6)(1-q^3u^3)}{(1-q^4u^6)(1-q^2u^3)}.
\end{equation*}
By determinant formula, we obtain the following theorem:
\begin{thm}\label{thm:5.1} Let $\Gamma=\PGL_3(\mathbb{F}_q[t])$. The type 1 chamber zeta function $Z_\Gamma(u)$ converges for sufficiently small $u$ and it is given by
$$Z_{\Gamma}(u)=\frac{(1-q^4u^6)(1-q^2u^3)}{(1-q^3u^6)(1-q^3u^3)}.$$
\end{thm}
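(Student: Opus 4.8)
The plan is to deduce the closed formula from the determinant identity $Z_\Gamma(u)=\det(I-uT)^{-1}$ proved in the preceding proposition, so that everything reduces to an explicit evaluation of $\det(I-uT)$. Since $T$ acts on the infinite-dimensional space $S(\operatorname{C}(\Gamma\backslash\mathcal{B}))$, one first interprets this determinant as the coefficientwise limit $\det(I-uT)=\lim_{k\to\infty}\det(I-uT_k)$, where $T_k$ is the restriction of $T$ to the subcomplex $X_k$ of vertices with second coordinate $n\le k$. The legitimacy of passing to this limit — i.e.\ the stabilization of each coefficient of $u^n$ for $k$ large — is precisely the local finiteness supplied by Lemma~\ref{lem:3.1}.

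Next I would compute $\det(I-uT_k)$ for fixed $k$ by a second, horizontal truncation. Restricting further to the subcomplex $X_{k,N}$ with $m\le N$ produces, in the ordering that groups the six pointed chambers $c_{m,n,i},d_{m,n,i}$ over a fixed horizontal slice, a block tridiagonal matrix $M_{k,N}(u)$ with diagonal blocks $A_k,B_k$ and off-diagonal blocks $C_k,D_k$. The iterated Schur complement $B_{k,\ell+1}=B_k-C_kB_{k,\ell}^{-1}D_k$ is then shown to preserve the stated sparsity pattern (only the $(6s,6t-1)$ entries move, governed by the recursion for $a_{k,(s,t),\ell}$), and each $B_{k,\ell}$ has determinant $1$ since its row-reduced echelon form is the identity. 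Therefore $\det M_{k,N}(u)=\det A_{k,N}(u)$, and letting $N\to\infty$ the recursion collapses to the one-step relations $a_{k,(s,t)}=q^3u^4 a_{k,(s,t-1)}$ for $s\ne t$ and $a_{k,(s,t)}=-(q-1)u+q^3u^4 a_{k,(s,t-1)}$ for $s=t$, which yields $\det(I-uT_k)=\det A_{k,0}(u)$ — an explicit $6\times 6$ determinant after a few elementary row operations.

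Finally I would solve the remaining recursions in $t$ to express everything through $a_{k,(1,1)}(u)$ and $a_{k,(s,1)}(u)$: using $a_{k,(1,t)}=q^{3t-3}u^{4t-4}a_{k,(1,1)}$ and substituting back converts the defining relations into finite geometric series, giving the closed forms
\[
a_{k,(1,1)}(u)=\frac{-(q-1)u-q^2(q-1)u^4}{1-\sum_{i=1}^k q^{3i}(q-1)u^{6i}},
\]
and the analogous expression for $a_{k,(s,1)}$. For $|u|$ small these converge as $k\to\infty$ to $a_{(1,1)}$ and $a_{(s,1)}$; plugging the limits into the $6\times 6$ determinant and simplifying produces
\[
\det(I-uT)=\lim_{k\to\infty}\det A_{k,0}(u)=\frac{(1-q^3u^6)(1-q^3u^3)}{(1-q^4u^6)(1-q^2u^3)},
\]
and taking reciprocals gives the claimed formula for $Z_\Gamma(u)$. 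Convergence of the defining Euler product for small $u$ then follows from Proposition~\ref{prop:4.2} together with the polynomial growth of $\operatorname{Tr}(T^n)$ implied by the rationality just established.

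The main obstacle is the bookkeeping in the middle step: one must verify that the nested Schur-complement matrices $B_{k,\ell}$ genuinely keep their prescribed sparsity for all $\ell$ — so that the recursion for $a_{k,(s,t),\ell}$ is self-consistent and stabilizes as $\ell\to\infty$ — and that the two limits $N\to\infty$ and $k\to\infty$ may be taken in this order without interference. Both points ultimately rely on Lemma~\ref{lem:3.1}, which bounds how deep into the cusp a closed admissible gallery of bounded length can reach; granted this, what remains is a finite, if somewhat lengthy, determinant computation.
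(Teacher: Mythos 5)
Your proposal follows the same two-stage truncation scheme (first in the cusp-depth parameter $n\le k$, then horizontally in $m\le N$), the same block-tridiagonal setup with Schur complements $B_{k,\ell+1}=B_k-C_kB_{k,\ell}^{-1}D_k$, the same sparsity/determinant-one argument reducing $\det M_{k,N}$ to $\det A_{k,N}$, and the same two recursions for $a_{k,(1,1)}$ and $a_{k,(s,1)}$ that resolve into geometric series and converge to the claimed rational expression as $k\to\infty$. This matches the paper's Section~5 proof in structure and in every essential step.
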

Recall that we defined $$N_n(\Gamma\backslash\mathcal{B})=\sum_{c\colon\ell([\mathbf{c}])=n}w([\mathbf{c}])\ell([\mathbf{c}]_0)$$ by the weighted number of type $1$ closed galleries in $\Gamma\backslash \mathcal{B}$ of length $n$.
The trace of the operator $T^m$ coincides with $N_m(\Gamma\backslash\mathcal{B})$. By Proposition \ref{prop:4.2}, we have
\begin{equation}\label{eq:5.1}
u\frac{d}{du}\log Z_{\Gamma}(u)=u\frac{Z'_{\Gamma}(u)}{Z_{\Gamma}(u)}=\sum_{m=1}^{\infty} N_m(\Gamma\backslash \mathcal{B})u^m.
\end{equation}
\begin{coro} Let $\Gamma=\operatorname{PGL}(3,\mathbb{F}_q[t])$ and $N_m(\Gamma\backslash\mathcal{B})$ as above. Then, we have
$$N_m(\Gamma\backslash\mathcal{B})=\left\{\begin{array}{ll}3q^{3r}-3q^{2r} & \textrm{ if }m=3r\textrm{ and }m\not\equiv 0 \,(\operatorname{mod}\,{6}) \\ 
3q^{6r}-9q^{4r}+6q^{3r} & \textrm{ if }m=6r \\ 0 
& \textrm{ otherwise}\end{array}\right..$$
\end{coro}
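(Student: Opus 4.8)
The plan is to read off the coefficients $N_m(\Gamma\backslash\mathcal{B})$ directly from the closed form for $Z_\Gamma(u)$ in Theorem~\ref{thm:5.1}, using the logarithmic-derivative identity \eqref{eq:5.1}. First I would write
$$\log Z_\Gamma(u)=\log(1-q^4u^6)+\log(1-q^2u^3)-\log(1-q^3u^6)-\log(1-q^3u^3),$$
and apply the elementary expansion $u\dfrac{d}{du}\log(1-q^a u^k)=-k\sum_{j\geq 1}q^{aj}u^{kj}$ to each of the four terms. Substituting into \eqref{eq:5.1} gives
$$\sum_{m\geq 1}N_m(\Gamma\backslash\mathcal{B})\,u^m=-6\sum_{j\geq 1}q^{4j}u^{6j}-3\sum_{j\geq 1}q^{2j}u^{3j}+6\sum_{j\geq 1}q^{3j}u^{6j}+3\sum_{j\geq 1}q^{3j}u^{3j}.$$

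Next I would compare coefficients of $u^m$ case by case. If $m\not\equiv 0\pmod 3$, no monomial on the right-hand side has exponent $m$, so $N_m=0$. If $m=3r$ with $r$ odd, only the two series in powers of $u^3$ contribute, each at $j=r$, yielding $N_{3r}=3q^{3r}-3q^{2r}$. If $m=6r$, the two series in powers of $u^3$ contribute their $j=2r$ terms and the two series in powers of $u^6$ contribute their $j=r$ terms, so
$$N_{6r}=\bigl(3q^{6r}-3q^{4r}\bigr)+\bigl(6q^{3r}-6q^{4r}\bigr)=3q^{6r}-9q^{4r}+6q^{3r}.$$
This exhausts all three cases and proves the corollary.

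The computation is essentially routine once Theorem~\ref{thm:5.1} is in hand; the only point demanding care is the case $6\mid m$, where $m$ is simultaneously of the form $3\cdot(2r)$ and $6\cdot r$, so both families of geometric series contribute and their outputs must be added. Keeping these cross-contributions straight is the sole bookkeeping subtlety, and I do not anticipate any genuine obstacle beyond it. As a sanity check one may instead verify that the generating function $\sum_m N_m u^m$ assembled from the three stated cases coincides with the right-hand side displayed above, which amounts to a partial-fractions identity.
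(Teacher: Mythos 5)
Your proof is correct and follows essentially the same route as the paper's: take the logarithmic derivative of the rational expression for $Z_\Gamma(u)$ from Theorem~\ref{thm:5.1}, expand each of the four resulting geometric series, and read off coefficients case by case, being careful that when $6\mid m$ both the $u^3$-series (at $j=2r$) and the $u^6$-series (at $j=r$) contribute. The arithmetic in each case matches the paper's computation exactly.
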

\begin{proof}
By Theorem~5.1, we have
\[
Z_\Gamma(u)
=
\frac{(1-q^{4}u^{6})(1-q^{2}u^{3})}{(1-q^{3}u^{6})(1-q^{3}u^{3})}.
\]
Recall from \eqref{eq:5.1} that
\[
u\frac{d}{du}\log Z_\Gamma(u)
=
\sum_{m\ge1} N_m(\Gamma\backslash B)\,u^m.
\]
Taking the logarithmic derivative of $Z_\Gamma(u)$, we obtain
\begin{align*}
u\frac{d}{du}\log Z_\Gamma(u)
&=
u\frac{d}{du}\Bigl(
\log(1-q^{4}u^{6})
+
\log(1-q^{2}u^{3})
-
\log(1-q^{3}u^{6})
-
\log(1-q^{3}u^{3})
\Bigr) \\
&=
-\frac{6q^{4}u^{6}}{1-q^{4}u^{6}}
-\frac{3q^{2}u^{3}}{1-q^{2}u^{3}}
+\frac{6q^{3}u^{6}}{1-q^{3}u^{6}}
+\frac{3q^{3}u^{3}}{1-q^{3}u^{3}}.
\end{align*}
Equivalently,
\begin{equation}\label{eq:Nm-generating}
\sum_{m\ge1} N_m(\Gamma\backslash B)\,u^m
=
\frac{3q^{3}u^{3}}{1-q^{3}u^{3}}
-
\frac{3q^{2}u^{3}}{1-q^{2}u^{3}}
+
\frac{6q^{3}u^{6}}{1-q^{3}u^{6}}
-
\frac{6q^{4}u^{6}}{1-q^{4}u^{6}}.
\end{equation}
Each term on the right-hand side admits a geometric series expansion:
\[
\frac{3q^{3}u^{3}}{1-q^{3}u^{3}}
=
3\sum_{r\ge1} q^{3r}u^{3r},
\qquad
\frac{3q^{2}u^{3}}{1-q^{2}u^{3}}
=
3\sum_{r\ge1} q^{2r}u^{3r},
\]
\[
\frac{6q^{3}u^{6}}{1-q^{3}u^{6}}
=
6\sum_{r\ge1} q^{3r}u^{6r},
\qquad
\frac{6q^{4}u^{6}}{1-q^{4}u^{6}}
=
6\sum_{r\ge1} q^{4r}u^{6r}.
\]
It follows immediately that $N_m(\Gamma\backslash B)=0$ unless $3\,|\, m$.
If $m=3r$ and $m\not\equiv 0 \pmod{6}$, then only the first two series contribute,
and we obtain
\[
N_{3r}(\Gamma\backslash B)
=
3q^{3r}-3q^{2r}.
\]
If $m=6r$, then all four series in \eqref{eq:Nm-generating} contribute, yielding
\[
N_{6r}(\Gamma\backslash B)
=
\bigl(3q^{6r}-3q^{4r}\bigr)
+
\bigl(6q^{3r}-6q^{4r}\bigr)
=
3q^{6r}-9q^{4r}+6q^{3r}.
\]
This completes the proof.
\end{proof}

\begin{remark}[Hecke operators and (co)homology]
The chamber transfer operator $T$ may also be viewed as a specific element of the Hecke algebra of
\(
G=\mathrm{PGL}_3\bigl(\mathbb{F}_q(\!(t^{-1})\!)\bigr),
\)
acting on spaces of $\Gamma$-invariant functions on the building $\mathcal{B}$. In particular, $T$ acts naturally on various homology and cohomology groups of $\Gamma\backslash\mathcal{B}$ with coefficients in local systems arising from finite-dimensional representations of $G$. It would be interesting to investigate to what extent the chamber zeta function $Z_{\Gamma}(u)$ can be factored into contributions coming from Hecke operators acting on such (co)homology groups, in analogy with the factorization of Selberg zeta functions into automorphic $L$-functions in the archimedean setting. We expect that this perspective should be especially relevant for higher-rank groups and for quotients carrying additional arithmetic structure.
\end{remark}


\begin{thebibliography}{xxx}

\bibitem[AB08]{AbramenkoBrown}
P.~Abramenko and K.~S.~Brown,
\newblock {\em Buildings: Theory and Applications},
\newblock Graduate Texts in Mathematics, Vol.~248,
\newblock Springer, New York, 2008.
\bibitem[Ba92]{B} H. Bass, \emph{The Ihara-Selberg zeta function of a tree lattice}, Internat. J. Math. \textbf{3} (1992) 717-797
\bibitem[BT72]{BT72}
F.~Bruhat and J.~Tits,
\newblock Groupes r\'eductifs sur un corps local: I. Donn\'ees radicielles valu\'ees,
\newblock {\em Publ. Math. Inst. Hautes \'Etudes Sci.} \textbf{41} (1972), 5--251.

\bibitem[BT84]{BT84}
F.~Bruhat and J.~Tits,
\newblock Groupes r\'eductifs sur un corps local: II. Sch\'emas en groupes. Existence d'une donn\'ee radicielle valu\'ee,
\newblock {\em Publ. Math. Inst. Hautes \'Etudes Sci.} \textbf{60} (1984), 5--184.

\bibitem[CJK15]{CJK} G. Chinta, J. Jorgenson, and A. Karlsson, \emph{Heat kernels on regular graphs and generalized Ihara zeta functions formulas}, Monatshefte für Mathematik \textbf{178} (2015) 171-190
\bibitem[Cl09]{Cla} B. Clair, \emph{Zeta functions of graphs with $\mathbb{Z}$-actions}, J. Combin. Theory Ser. B \textbf{99} (2009), 48-61
\bibitem[CM94]{CM} D. I. Cartwright and W. MŁotkowski, \emph{Harmonic analysis for groups acting on triangle buildings}, Journal of the Australian Mathematical Society \textbf{56}, (1994) 345-383
\bibitem[CS01]{CS} B. Clair and S. M.-Sharghi, \emph{Zeta functions of discrete groups acting on trees}, J. Algebra \textbf{237} (2001), 591-620
 \bibitem[DK14]{DK14} A. Deitmar and M.-H. Kang, \emph{Geometric zeta functions for higher rank $p$-adic groups}, Illinois Journal of Mathematics \textbf{58} (2014) 719-738
  \bibitem[DK18]{DK18} A. Deitmar and M.-H. Kang, \emph{Tree-lattice zeta functions and class numbers}, Michigan Math. J. \textbf{67} (2018) 617-645.
\bibitem[GZ04]{GZ} R. I. Grigorchuk-A. \'{Z}uk, \emph{The Ihara zeta functions of infinite graphs, the KNS spectral measure and integrable maps}, Random walks and geometry, (2004) 141-180
 \bibitem[Has89]{Has} K. Hashimoto, \emph{Zeta functions of finite graphs and representations of p-adic groups}, Automorphic forms and geometry of arithmetic varieties. \textbf{15} (1989), 211-280.

  \bibitem[HK24]{HK1} S. Hong and S. Kwon, \emph{Spectrum of the weight adjacency operator on a non-uniform arithmetic quotient of $\operatorname{\operatorname{PGL}}_3$}, Combinatorics and Number Theory \textbf{13} (2024) 103-122.
  \bibitem[HK25]{HK2} S. Hong and S. Kwon, \emph{Zeta functions of geometrically finite graphs of groups}, Annals of Combinatorics \textbf{29} (2025) 995-1018.
  \bibitem[HK26]{HK3} S. Hong and S. Kwon, \emph{Edge zeta function and closed cycles in the standard non-uniform complex from $\operatorname{\operatorname{PGL}}_3$}, preprint(arXiv:2411.15489v1).
\bibitem[Ih65]{I} Y. Ihara, \emph{Discrete subgroups of $\operatorname{PL}(2, k_\mathcal{P})$}, In: Algebraic Groups and Discontinuous Subgroups (Proc. Sympos. Pure Math., Boulder, Colo., 1965), 272–278, Amer. Math. Soc. 1966. MR0205952 Zbl 0261.20029
  \bibitem[KL14]{KL} M.-H. Kang and W.-C. W. Li, \emph{Zeta functions of complexes arising from $\operatorname{PGL}(3)$}, Advances in Mathematics \textbf{256}, (2014) 46-103
\bibitem[KLW10]{KLW10} M.-H. Kang, W.-C. W. Li and C.-J Wang, \emph{The zeta function of complexes from $\operatorname{PGL}(3)$: A representation-theoretic approach}, Israel Journal of Mathematics \textbf{177} (2010) 335-348
\bibitem[KLW18]{KLW18} M.-H. Kang, W.-C. W. Li and C.-J Wang, \emph{Zeta and L-functions of finite quotients of apartments and buildings}, Israel Journal of Mathematics \textbf{228} (2018) 79-117
 \bibitem[Kw24]{Kw} S. Kwon, \emph{Transition probability of a discrete geodesic flow on the standard non-uniform quotient of $\operatorname{PGL}_3$}, Bull. Korean Math. Soc. \textbf{61} (2024) 825-835
\bibitem[LSV05]{LSV} A. Lubotzky, B. Samuels and U. Vishne, \emph{Ramanujan complexes of type $\widetilde{A}_d$}, Israel J. Math. \textbf{149} (2005) 267-299
\bibitem[Ro89]{Ronan}
M.~Ronan,
\newblock {\em Lectures on Buildings},
\newblock Perspectives in Mathematics, Vol.~7,
\newblock Academic Press, Inc., Boston, MA, 1989.

\bibitem[Su86]{Su} T. Sunada, \emph{L-functions in geometry and some applications}, Curvature and topology of Riemannian manifolds (Katata, 1985), Lecture Notes in Math., vol. \textbf{1201}, Springer, Berlin (1986), 266-284
\bibitem[ST96]{ST96}
H.~M.~Stark and A.~A.~Terras,
\newblock Zeta functions of finite graphs and coverings, Part I$\!$I,
\newblock (1996).  
\bibitem[Te11]{Terras}
A.~Terras,
\newblock {\em Zeta Functions of Graphs: A Stroll through the Garden},
\newblock Cambridge University Press, 2011.
































 \end{thebibliography}
\end{document}